\newtheorem{theorem}{Theorem}
\newtheorem*{theorem*}{Theorem}
\newtheorem*{maintheorem*}{Main Theorem}
\newtheorem{lemma}[theorem]{Lemma}
\newtheorem*{lemma*}{Lemma}
\newtheorem*{fact*}{Fact}
\newtheorem{corollary}[theorem]{Corollary}
\newtheorem*{corollary*}{Corollary}
\newtheorem*{proposition*}{Proposition}
\newtheorem*{gquestion*}{Guiding Question}
\theoremstyle{definition}
\newtheorem*{definition*}{Definition}
\newtheorem{remark}[theorem]{Remark}
\newtheorem*{remark*}{Remark}
\newtheorem{question}[theorem]{Question}
\newtheorem*{question*}{Question}
\newcommand{\NN}{{\mathbb{N}}}
\newcommand{\ZZ}{{\mathbb{Z}}}
\renewcommand{\c}{{\mathfrak{c}}}
\DeclareMathOperator{\restr}{\upharpoonright}
\newcommand{\simpleset}[1]{{\{{#1}\}}}
\newcommand{\set}[2]{{\{ {#1} \mid {#2} \}}}
\newcommand{\seq}[2]{{\langle {#1} \mid {#2} \rangle}}
\newcommand{\infsubset}[1]{{[#1]^{\omega}}}
\DeclareMathOperator{\cantorspace}{{^\omega 2}}
\DeclareMathOperator{\fix}{fix}
\DeclareMathOperator{\actson}{{\curvearrowright}}
\newcommand{\bigast}{\mathop{\scalebox{1.5}{\raisebox{-0.2ex}{$\ast$}}}}
\title{Isomorphism types of definable (maximal) cofinitary groups}
\author{Lukas Schembecker}
\begin{document}	
	\maketitle
	
	\begin{abstract}
		In \cite{Kastermans_2006} Kastermans proved that consistently $\bigoplus_{\aleph_1} \ZZ_2$ has a cofinitary representation.
		We present a short proof that $\bigoplus_{\c} \ZZ_2$ always has an arithmetic cofinitary representation.
		Further, for every finite group $F$ we construct an arithmetic maximal cofinitary group of isomorphism type $(\bigast_\c \ZZ) \times F$.
		This answers an implicit question by Schrittesser and Mejak in \cite{MejakSchrittesser_2022} whether one may construct definable maximal cofinitary groups not decomposing into free products.
	\end{abstract}
	
	\section{Introduction}
	
	A cofinitary group is a subgroup $G \subseteq S_\omega$ such that every $g \in G \setminus \simpleset{e}$ only has finitely many fixpoints.
	It is maximal iff it is maximal with respect to inclusion.
	We are interested in the possible isomorphism types of (maximal) cofinitary groups.
	A full classification of all possible isomorphism types of (maximal) cofinitary groups is still open, but there are some results that realize certain groups as (maximal) cofinitary groups and conversely that maximal cofinitary groups cannot possibly have certain isomorphism types.
	Equivalently, in terms of group actions we may think about which groups may possess a (maximal) cofinitary representation when acting on $\omega$.
	The known restrictions on the possible isomorphism types of maximal cofinitary groups may be summarized as follows:
	
	\begin{theorem*}[Truss, \cite{Truss_1985}; Adeleke, \cite{Adeleke_1988}]
		Every countable cofinitary group is not maximal.
	\end{theorem*}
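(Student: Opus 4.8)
The plan is to show that any countable cofinitary group $G \subseteq S_\omega$ can be properly extended: I will construct a permutation $h \in S_\omega \setminus G$ such that $\langle G, h\rangle$ is still cofinitary, which by definition means $G$ is not maximal. Fix an enumeration $G = \{g_n : n \in \omega\}$. Since every element of $\langle G,h\rangle$ is the value of a reduced word
\[
w = a_0\, h^{\epsilon_1}\, a_1 \cdots h^{\epsilon_m}\, a_m, \qquad a_i \in G,\ \epsilon_i \in \ZZ\setminus\{0\},
\]
of which there are only countably many, and since the words lying entirely in $G$ (the case $m = 0$) already have finitely many fixpoints by hypothesis, it suffices to build $h$ so that (i) $h$ is a bijection of $\omega$, (ii) $h \neq g_n$ for every $n$, and (iii) every reduced word $w$ with $m \geq 1$ has only finitely many fixpoints (note that (iii) in particular forces each such $w$ to be non-trivial). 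I remark that a naive Baire-category argument cannot produce such an $h$: the generic permutation has infinitely many fixpoints, since each $\{h : \exists x > N\ h(x) = x\}$ is open dense, so $h$ must instead be built by deliberately steering away from fixpoints.

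I will construct $h = \bigcup_s p_s$ as an increasing union of finite partial injections by a bookkeeping construction. I enumerate three kinds of tasks: for each point $k$, put $k$ into $\dom(h)$ and into $\ran(h)$ (towards (i)); for each $n$, arrange $h(k_0) \neq g_n(k_0)$ at some $k_0$ (towards (ii)); and for each reduced word $w$ with $m \geq 1$, \emph{activate} it, recording the bound $b_w := \max(\dom p_s \cup \ran p_s)$ at its activation stage. Once $w$ is active I maintain the promise that no later extension ever creates a fixpoint $x > b_w$ of $w$. A domain task for $k$ is met by choosing a fresh $l \notin \ran(p_s)$ and setting $h(k) = l$; a range task is met dually by choosing a fresh preimage. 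Granting that these promises can always be kept, each active word then has all of its fixpoints below $b_w$, hence finitely many, giving (iii), while totality and surjectivity follow from the domain and range tasks and (ii) from the diagonalisation tasks.

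The whole construction reduces to the following extension lemma, the heart of the matter: \emph{given a finite partial injection $p$, a point $k \notin \dom(p)$, and finitely many active reduced words $w_1,\dots,w_t$ (each with $m \geq 1$), the set of $l \notin \ran(p)$ for which $p \cup \{(k,l)\}$ forces a new fixpoint $x > b_{w_i}$ of some $w_i$ is finite} (and dually for range tasks). Granting this, at each stage only finitely many values of $l$ are forbidden by the active promises, finitely more by the finitely many diagonalisation constraints, and finitely more by freshness, so a legal $l$ always exists and every task is eventually met. To prove the lemma one fixes a word $w$ and analyses a hypothetical new fixpoint $x$: its computation under $w$ is a closed walk in the functional graph of $p \cup \{(k,l)\}$ that must traverse the single new edge $k \mapsto l$ (otherwise the fixpoint was already present). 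Because $l$ is fresh it has no outgoing $h$-edge, so each visit to $l$ must leave by a group element or by $h^{-1}$ back to $k$; splitting the walk at its (at most $M := \sum_i |\epsilon_i|$) uses of the new edge into determined ``old'' segments — compositions of the fixed group elements $a_i$ with the old $h$-edges of $p$ — and using the fixed anchor point $k$, the condition that the walk closes up becomes, for each of the finitely many combinatorial patterns, a single equation in $l$ with $k$ and $p$ as parameters.

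The main obstacle is precisely ruling out the degenerate case that such an equation holds for infinitely many $l$, which would stall the construction. This is exactly where the two remaining hypotheses are used: reducedness of $w$ forbids the trivial cancellations ($h^{-1}h$ or $h\,h^{-1}$) that would let the new edge contribute vacuously, and the cofinitariness of $G$ controls the resulting reduced $G$-words — for instance a pattern forcing ``$h(x)$ is a fixpoint of $a_i$'' has only finitely many solutions precisely because $a_i \neq e$ has finitely many fixpoints. A short case analysis over the finitely many patterns then shows each contributes only finitely many bad $l$, which establishes the lemma and hence the theorem.
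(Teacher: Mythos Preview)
The paper does not prove this theorem; it is quoted in the introduction as background, with attribution to Truss and Adeleke, and no argument is given. So there is no ``paper's own proof'' to compare against.

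As for the argument itself: your architecture is the standard one (build $h$ as a union of finite partial injections, bookkeeping over domain/range tasks, diagonalisation, and activation of reduced words), and it is correct. The one place that is genuinely only sketched is the extension lemma. Your description --- split a hypothetical fixpoint walk at its uses of the new edge, anchor at $k$, and reduce to finitely many equations in $l$ --- is the right picture, but the sentence ``a short case analysis over the finitely many patterns then shows each contributes only finitely many bad $l$'' hides real work. In particular one must check what happens when two consecutive uses of the new edge are separated only by a group letter $a_i$: then the intermediate constraint can take the form $a_i(l) = l$, and one needs reducedness to rule out the case $a_i = e$ (which would allow infinitely many $l$) and cofinitariness of $G$ for $a_i \neq e$; one also needs to verify that freshness of $l$ makes certain patterns simply undefined rather than yielding equations with infinitely many solutions. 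None of this is wrong in your write-up, just compressed; a reader who has not seen the argument before would not be able to reconstruct it from your last paragraph alone.
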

	
	\begin{theorem*}[Kastermans, \cite{Kastermans_2009}]
		Every cofinitary group with infinitely many orbits is not maximal.
	\end{theorem*}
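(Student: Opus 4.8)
The plan is to exhibit, for an arbitrary cofinitary $G \subseteq S_\omega$ with infinitely many orbits, a permutation $g \in S_\omega \setminus G$ such that $\langle G, g\rangle$ is still cofinitary; this witnesses that $G$ is not maximal. Since $\omega$ is countable there are exactly countably infinitely many orbits, so I first fix an enumeration $\seq{O_i}{i \in \omega}$ of them and partition the index set into $\ZZ$-many infinite pieces $\set{I_n}{n \in \ZZ}$. Setting $C_n = \bigcup_{i \in I_n} O_i$, each \emph{colour class} $C_n$ is infinite (a disjoint union of infinitely many nonempty orbits), and the colouring $c : \omega \to \ZZ$ given by $c(x) = n \iff x \in C_n$ is constant on each orbit, so that $c \circ a = c$ for every $a \in G$.

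The purpose of the colouring is to force $g$ to shift colours: I will build $g$ so that $c(g(x)) = c(x) + 1$ for all $x$. Every element of $\langle G, g\rangle$ is represented by a reduced word $w = a_k g^{e_k} a_{k-1} \cdots g^{e_1} a_0$ with $a_i \in G$ and $e_i \in \ZZ$, and since $c \circ a = c$ and $c(g^e(x)) = c(x) + e$ we obtain $c(w(x)) = c(x) + \sum_i e_i$ for every $x$. Hence if the total exponent $\sum_i e_i$ is nonzero, then $w$ sends every point into a different colour class and so has \emph{no} fixed point whatsoever. Thus words of nonzero total $g$-exponent are automatically harmless, and the whole difficulty concentrates in the reduced words that genuinely involve $g$ yet have total exponent $0$.

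To handle those I build $g = \bigcup_s p_s$ as an increasing union of finite partial injections, each satisfying $c(p_s(x)) = c(x)+1$. I enumerate all reduced zero-exponent words as $\seq{w_s}{s \in \omega}$ and interleave three tasks. First, \emph{totality and surjectivity}: to force a prescribed point into the domain (respectively range) of $g$, I send it to (respectively pull it back from) an unused point of the adjacent colour class, which is possible because every $C_n$ is infinite. Second, \emph{activation}: at stage $s$ I declare $w_s$ active, noting that, evaluated with the finite $p_s$ in place of $g$, the word $w_s$ is a finite partial map and so has only finitely many fixed points at this moment. Third, \emph{protection}: whenever a later step is forced to define a fresh value $g(y)=z$, I choose $z$ in the colour class $c(y)+1$ while avoiding the finitely many values that would complete a computation $w_t(x)=x$ for some currently active $w_t$ and some $x$; infinitely many admissible $z$ remain, so a safe choice always exists.

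Granting this, $\mathrm{Fix}(w)$ is finite for every reduced zero-exponent word $w$: once $w$ is activated, the protection task guarantees that no subsequent extension creates a new fixed point of $w$, while before activation only the finitely many fixed points already visible in the finite condition can occur. Combined with the colour computation above, which rules out any fixed point for words of nonzero total exponent, every nonidentity element of $\langle G, g\rangle$ has finitely many fixed points, so $\langle G, g\rangle$ is cofinitary. Finally $g \notin G$, since $g$ carries $C_0$ into $C_1$ whereas every element of $G$ fixes each $C_n$ setwise; hence $G \subsetneq \langle G, g\rangle$ with both groups cofinitary, and $G$ is not maximal. The delicate point, which I expect to demand the most care, is the protection task: one must verify that for each newly defined value of $g$ only finitely many targets are forbidden by the finitely many active words, so that protection and totality can be met simultaneously throughout the construction.
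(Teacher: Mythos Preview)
The paper does not prove this theorem; it is quoted in the introduction as a known result of Kastermans with a citation and no argument. There is therefore no proof in the paper to compare against, and I can only assess your sketch on its own merits.

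Your overall strategy is the standard one and the colouring idea is a clean way to dispose of words with nonzero $g$-exponent. The gap is in the protection task, and it is exactly the step you flagged: your claim that ``only finitely many targets are forbidden'' is false as stated. Consider the active word $w = g\,a\,g^{-1}$ with $a \in G \setminus \simpleset{e}$, and suppose the totality task forces you to define $g(y)$ for some $y \in \fix(a)$ (a finite but possibly nonempty set, since $a$ is cofinitary). Then for \emph{every} choice of $z$ in the correct colour class one gets
\[
w(z) \;=\; g\bigl(a(g^{-1}(z))\bigr) \;=\; g(a(y)) \;=\; g(y) \;=\; z,
\]
so a new fixed point is created no matter what you do, and your invariant ``no subsequent extension creates a new fixed point of $w$'' fails. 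The repair is not deep but does require different bookkeeping: one route is to show that for each word $w$ there is a \emph{finite} set of bad inputs $y$ (depending on $w$ and the current condition) outside of which a safe $z$ always exists, and to absorb those finitely many bad $y$'s into $\dom(p)$ at the activation stage for $w$; another is to replace protection by a ``hitting'' task over pairs $(w,x)$, extending so that $w(x)$ becomes defined and $\neq x$, together with a lemma bounding how many $x$ can have $w(x)=x$ forced before their turn. Either way, the combinatorial lemma you need is genuinely more delicate than ``finitely many forbidden targets per step,'' and your sketch does not yet contain it.
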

	
	As a consequence of this theorem, Blass noticed the following:
	
	\begin{corollary*}[Blass, \cite{Kastermans_2006}]
		Every abelian cofinitary group is not maximal.
	\end{corollary*}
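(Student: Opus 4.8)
The plan is to reduce to the two theorems cited immediately above by a case analysis on the orbit structure of the group. Let $G \subseteq S_\omega$ be abelian and cofinitary; the goal is to show $G$ is not maximal. The crucial observation I would establish first is that commutativity forces each fixpoint set to be invariant under the whole group: for $g, h \in G$ and $n$ with $g(n) = n$, one computes $g(h(n)) = h(g(n)) = h(n)$, so that $h(n)$ is again fixed by $g$. Hence $\fix(g) = \set{n}{g(n) = n}$ is a union of $G$-orbits for every $g \in G$.

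Next I would exploit cofinitariness. Suppose $n \in \omega$ lies in an infinite orbit $Gn$. If the stabilizer $G_n$ contained some nonidentity $g$, then $\fix(g)$ would be finite by cofinitariness, yet $G$-invariant and containing $n$, hence containing all of $Gn$ — contradicting that $Gn$ is infinite. Therefore every infinite orbit has trivial stabilizer, and by the orbit--stabilizer correspondence the orbit map $g \mapsto g(n)$ is then a bijection from $G$ onto $Gn \subseteq \omega$. In particular $G$ is countable whenever it possesses an infinite orbit.

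This sets up the case split. If $G$ has an infinite orbit, the previous step shows $G$ is countable, so it is not maximal by the theorem of Truss and Adeleke. If instead every orbit of $G$ is finite, then since $\omega$ is infinite the orbits must be infinite in number, and $G$ is not maximal by Kastermans' theorem. Either way $G$ fails to be maximal. I expect the only delicate point to be the cardinality bookkeeping in the infinite-orbit case — namely, arguing that a free action on a subset of $\omega$ pins $\sizeof{G}$ down to at most $\aleph_0$ — while the genuinely substantive content is outsourced to the two quoted theorems, so there is no real obstacle beyond assembling them correctly.
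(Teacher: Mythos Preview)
Your argument is correct. The paper does not spell out a proof but presents the corollary as a consequence of the two theorems stated just before it (Truss--Adeleke on countable groups and Kastermans on infinitely many orbits), and your case split is exactly how one combines them: an abelian cofinitary group either has only finite orbits, hence infinitely many of them, or it has an infinite orbit on which it acts freely and is therefore countable. One small remark: the paper's phrasing ``as a consequence of this theorem'' points only to Kastermans' orbit theorem, but as your example-free argument implicitly shows, Kastermans' result alone is not enough (e.g.\ $\ZZ$ acting by a single infinite cycle is abelian, cofinitary, and has one orbit), so invoking Truss--Adeleke in the infinite-orbit case is genuinely necessary and not an extraneous addition on your part.
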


	In terms of definability, Kastermans also proved the following restriction.
	A set is $K_\sigma$ iff it is a countable union of compact sets.

	\begin{theorem*}[Kastermans, \cite{Kastermans_2006}]
		Every $K_\sigma$ cofinitary group is not maximal.
	\end{theorem*}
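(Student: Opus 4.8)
The plan is to realize non-maximality directly: given a $K_\sigma$ cofinitary group $G$, I would build a single permutation $g \in S_\omega \setminus G$ such that $\langle G, g\rangle$ is again cofinitary. Write $G = \bigcup_{n} K_n$ with each $K_n$ compact, and arrange $e \in K_0 \subseteq K_1 \subseteq \cdots$ and $K_n = K_n^{-1}$. The only feature of compactness I will use is that for compact $K \subseteq S_\omega$ and each $i \in \omega$ the sets $\{h(i) : h \in K\}$ and $\{h^{-1}(i) : h \in K\}$ are finite (a compact subset of Baire space has finite projection to each coordinate). Before constructing $g$ I reduce the target. Since conjugate permutations have equinumerous fixed-point sets, a word representing an element of $\langle G, g\rangle$ has finite $\fix$ iff its cyclic reduction does; hence it suffices to ensure that every pure power $g^m$ ($m \neq 0$) and every cyclically reduced word
\[ w = g^{m_1} h_1 g^{m_2} h_2 \cdots g^{m_l} h_l, \qquad m_i \in \ZZ\setminus\{0\},\ h_i \in G\setminus\{e\}, \]
has finitely many fixed points. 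Words with no occurrence of $g$ lie in $G$ and are already fine; moreover, once every such $w$ has finite $\fix$, none of them is the identity, so the free product $\langle G, g\rangle \cong G * \ZZ$ comes for free.

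I would construct $g = \bigcup_s p_s$ as an increasing chain of finite partial injections $p_s : \omega \rightharpoonup \omega$, driven by a bookkeeping that interleaves three kinds of tasks. \emph{Totality and surjectivity}: at designated stages force the least integer missing from $\dom p_s$ (resp. $\ran p_s$) into the domain (resp. range), so that $g$ becomes a genuine bijection. \emph{Diagonalization}: to guarantee $g \notin G$ it suffices to guarantee $g \notin K_n$ for every $n$; given $p_s$, the set of $h \in K_n$ extending $p_s$ is compact, so for any $a \notin \dom p_s$ the possible values $h(a)$ form a finite set, and choosing $p_{s+1}(a)$ outside it removes every extension of $p_s$ lying in $K_n$. \emph{Freeness}: I fix an enumeration of all \emph{shapes} $((m_1,\dots,m_l),(n_1,\dots,n_l))$, of which there are only countably many, and devote cofinally many stages to each. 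The heart of the construction is a rule governing how new pairs are added: whenever $p_{s+1}$ is obtained from $p_s$ by adjoining a pair $a \mapsto b$ in which $a$ or $b$ is new, the new endpoint is chosen \emph{large} and \emph{outside a finite forbidden set} $F_s$, where $F_s$ collects the values that the finitely many compact pieces $K_{n}$ and shapes activated by stage $s$ could use to close a cycle through already-committed data.

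The crux is the verification that this freeness rule makes every word cofinitary. Fix a cyclically reduced $w$ as above with $h_i \in K_{n_i}$, and let $s_0$ be a stage after the corresponding shape has been activated. A fixed point of $w$ at a point $x$ unwinds into a closed path in $\omega$ alternating $h_i$-steps with $g^{m_i}$-steps, where each $g$-step is a composition of edges $a \mapsto g(a)$ recorded in some $p_s$. I would single out an edge along this path adjoined at the latest stage; by the freeness rule its new endpoint is larger than, and avoids, every point that the bounded maps $h_i \in K_{n_i}$ (here compactness supplies finiteness of $\{h_i(\cdot)\}$ and $\{h_i^{-1}(\cdot)\}$) can reach among the other points of the path. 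Consequently the two $h$-steps adjacent to that edge cannot reconnect it into the cycle, unless the entire path consists of edges already present in $p_{s_0}$, of which there are only finitely many, forcing $x$ below a bound determined by $p_{s_0}$. The powers $g^m$ are the degenerate case $l=0$, handled by the same analysis with the $h_i$-steps absent.

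The main obstacle I anticipate is making this latest-edge argument airtight in the presence of cancellation. Within a single block $g^{m_i}$ all edges point the same way, so no backtracking occurs there; but between consecutive blocks, separated only by some $h_i \neq e$, one must rule out that $h_i$ sends the endpoint of one block to a point whose $g$-image immediately retraces the next. This is exactly what the forbidden sets $F_s$ must preclude, so the delicate point is to define $F_s$ so as to block every route by which a fresh large endpoint could be cancelled back onto old data, while simultaneously confirming that the bookkeeping still leaves enough freedom at each stage to meet the totality, diagonalization, and freeness demands at once.
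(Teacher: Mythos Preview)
The paper does not prove this theorem; it is merely quoted in the introduction as a known result of Kastermans (with citation \cite{Kastermans_2006}) and no argument is given. So there is no ``paper's own proof'' to compare against.

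That said, your outline is essentially the standard argument, and in particular it is the approach Kastermans himself uses. The observation you isolate --- that a compact $K \subseteq S_\omega$ has finite coordinate projections, so both $\{h(i): h \in K\}$ and $\{h^{-1}(i): h \in K\}$ are finite for each $i$ --- is exactly the leverage one extracts from the $K_\sigma$ hypothesis, and the rest is a Zhang-style finite-approximation construction with bookkeeping. Your reduction to cyclically reduced words is correct, the diagonalization against each $K_n$ is straightforward for the reason you give, and the ``latest-edge'' analysis of a hypothetical fixed-point cycle is the right way to organize the freeness verification. You have also correctly located the one genuinely delicate point: specifying the forbidden sets $F_s$ so that a freshly chosen large endpoint cannot be carried by some $h_i \in K_{n_i}$ back onto already-committed data, while still leaving room to meet the other demands. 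In the standard write-ups this is handled by including in $F_s$ the $K_{n_i}$- and $K_{n_i}^{-1}$-images of all points already appearing in $p_s$ for each activated shape, which is a finite set by compactness; once that is made explicit, the argument goes through.
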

	
	On the positive side, Zhang's forcing \cite{Zhang_2000} may be used to force the existence of a maximal cofinitary representation of the free group in $\kappa$ generators for any $\kappa$ of uncountable cofinality.
	For uncountable $\kappa$ of countable cofinality one may use a product version of Zhang's forcing as in \cite{FischerTornquist_2015}.
	Further, as a converse to the restriction above, Kastermans \cite{Kastermans_2009} proved that consistently for every $n \in \NN$ and $m \in \NN \setminus \simpleset{0}$ one may force the existence of a maximal cofinitary group with exactly $n$ finite and $m$ infinite orbits.
	He also proved that consistently there is a locally finite maximal cofinitary group \cite{Kastermans_2009}.
	Finally, a modification of Zhang's forcing also yields:
		
	\begin{theorem*}[Kastermans, \cite{Kastermans_2006}]
		There exists a c.c.c.\ forcing which forces the existence of a cofinitary representation of $\bigoplus_{\mathfrak{\aleph_1}} \ZZ_2$.
	\end{theorem*}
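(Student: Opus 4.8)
The plan is to modify Zhang's forcing so that it generically builds $\aleph_1$ pairwise-commuting involutions $\seq{g_\alpha}{\alpha<\omega_1}$ of $S_\omega$ all of whose nontrivial finite products have only finitely many fixpoints. Every nontrivial element of $\bigoplus_{\aleph_1}\ZZ_2$ is of the form $\prod_{\alpha\in w}a_\alpha$ for a unique nonempty finite $w\subseteq\omega_1$, so it suffices to send $a_\alpha\mapsto g_\alpha$ and to guarantee that each $g_w:=\prod_{\alpha\in w}g_\alpha$ (well defined, as the $g_\alpha$ commute) moves all but finitely many points. Faithfulness is then automatic: if some $g_w$ were the identity it would fix every point, contradicting the finite-fixpoint guarantee, so distinct words induce distinct permutations.

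\textbf{Conditions.} A condition is a pair $p=(\seq{\sigma^p_\alpha}{\alpha\in a_p},F_p)$ where $a_p\in\finsubset{\omega_1}$, each $\sigma^p_\alpha\colon\omega\partialto\omega$ is a finite partial injection that is an involution on its domain, these partial involutions pairwise commute wherever the relevant compositions are defined, and $F_p$ is a finite set of nonempty $w\in\finsubset{a_p}$ (the \emph{frozen} words). We order by $q\extends p$ iff $a_q\supseteq a_p$, each $\sigma^q_\alpha\supseteq\sigma^p_\alpha$, $F_q\supseteq F_p$, and for every $w\in F_p$ the partial permutation $\prod_{\alpha\in w}\sigma^q_\alpha$ has no fixpoint beyond those already present in $\prod_{\alpha\in w}\sigma^p_\alpha$. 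Thus extension preserves the order-$2$ and commuting relations and is never allowed to create a new fixpoint for an already-frozen word.

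\textbf{Genericity.} Two families of dense sets do the work. For each $\alpha<\omega_1$ and $n\in\omega$, the set of $p$ with $n\in\dom\sigma^p_\alpha$ is dense: extend $\sigma^p_\alpha$ by $n\mapsto m$, $m\mapsto n$ for a fresh $m$ larger than every active point, so that any word-evaluation using the new edge lands on $m$ and gets stuck rather than returning to its start; this creates no new fixpoint for the finitely many $w\in F_p$ and makes all new commuting squares vacuous. Hence each $g_\alpha:=\bigcup\set{\sigma^p_\alpha}{p\in G}$ is a total involution, and the $g_\alpha$ commute. Second, for each nonempty $w\in\finsubset{\omega_1}$ the set of $p$ with $w\in F_p$ is dense (first force $a_p\supseteq w$, then adjoin $w$); freezing $w$ pins its fixpoint set to the finitely many fixpoints present at that moment, so $g_w$ has only finitely many fixpoints. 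Together these show that in $V[G]$ the map $a_\alpha\mapsto g_\alpha$ is a cofinitary representation of $\bigoplus_{\aleph_1}\ZZ_2$.

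\textbf{The c.c.c.} This is the main obstacle, since the uncountable index set makes compatibility far from automatic; the abelian relations couple all generators, so one cannot simply take unions as in the free case. Given $\aleph_1$ conditions, apply the $\Delta$-system lemma to the supports $\set{a_p}{p}\subseteq\finsubset{\omega_1}$ to obtain root $R$, and again to the finite active point-sets $D_p=\bigcup_\alpha(\dom\sigma^p_\alpha\cup\ran\sigma^p_\alpha)\subseteq\omega$ to obtain point-root $E$. Since each $\sigma^p_\alpha$ is a finite partial injection on $\omega$ and each $w\in F_p$ with $w\subseteq R$ ranges over a finite set, shrink to an uncountable subfamily on which all conditions share the same root generators $\seq{\sigma_\alpha}{\alpha\in R}$, the same frozen root-words, and the same restriction of all data to $E$. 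Given two such conditions $p,q$, conjugate $q$ by an automorphism of $\omega$ fixing $E$ pointwise and sending $D_q\setminus E$ into $\omega\setminus D_p$; this yields an isomorphic condition still agreeing with $p$ on the root, whose private point-set is now disjoint from that of $p$ off $E$. The union $r$, with $F_r=F_p\cup F_q$, is then a finite system of pairwise-commuting partial involutions: commuting between each private part and the shared root is inherited, while the two private parts can interact only through $E$, where the data agree. The key verification is fixpoint-preservation: a word $w\in F_p$ involves only generators in $R\cup(a_p\setminus R)$, none moved by $q$'s displaced private data, so $\prod_{\alpha\in w}\sigma^r_\alpha=\prod_{\alpha\in w}\sigma^p_\alpha$ gains no new fixpoint, and symmetrically for $F_q$; words mixing the two private parts are simply not yet frozen. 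Hence $r\extends p,q$, so $\PP$ is c.c.c., preserves cardinals, and in $V[G]$ the generic involutions witness that $\bigoplus_{\aleph_1}\ZZ_2$ has a cofinitary representation. The delicate point throughout is ensuring that the amalgam still satisfies the commuting relation on the shared root $E$, which is precisely what the uniformization on the $\omega$-point-sets is arranged to guarantee.
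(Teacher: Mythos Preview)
Your approach is a natural one for Kastermans' theorem as stated---a Zhang-style c.c.c.\ forcing adding $\aleph_1$ commuting cofinitary involutions---but it is not what the paper does. The paper does not reprove the forcing result at all; its point is precisely that forcing is unnecessary. Instead it gives a direct $\mathrm{ZFC}$ construction of a cofinitary representation of $\bigoplus_{\c}\ZZ_2$ (strictly stronger than the cited statement): partition $\omega$ into intervals $I_n$ with $|I_n|=|H_n|$ for $H_n:=\bigoplus_{s\in{}^n2}\ZZ_2$, identify $I_n$ with $H_n$ via the free left-regular action, and let the generator $h_f$ (for $f\in\cantorspace$) act on $I_n$ as $h_{f\restr n}$ does. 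A nontrivial element $\sum_{f\in F_0}h_f$ then acts fixpoint-freely on every $I_n$ once $n$ is large enough to distinguish the finitely many $f\in F_0$. This buys a $\Sigma^0_2$ representation with $\c$ generators outright in $\mathrm{ZFC}$, whereas your argument yields only the consistency of $\aleph_1$ generators.

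As for your forcing sketch itself, it is broadly on the right track but has gaps. In the c.c.c.\ argument the second $\Delta$-system and the conjugation step are superfluous: there are only countably many finite subsets of $\omega$, so after thinning you may take all $D_p$ literally equal to $E$, making $D_q\setminus E$ empty. More importantly, the phrase ``the same restriction of all data to $E$'' must be sharpened. You need to thin so that the private generators of each condition, listed in increasing index order, form the \emph{same} tuple of partial involutions; only then does commutativity between $p$'s private block and $q$'s private block follow (namely, from commutativity within a single condition). As written, nothing guarantees that a private generator of $p$ commutes with a private generator of $q$ on $E$, which is exactly the coupling you flagged as the main obstacle. The domain-extension density claim likewise needs more care: adding $n\mapsto m$ to one $\sigma^p_\alpha$ can break partial commutativity with some $\sigma^p_\beta$ that is already defined at $n$, and one typically has to extend several generators simultaneously along fresh points to repair this.
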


	In the second section of this paper we will prove that the existence of such a cofinitary representation is not just consistent, but in fact always exists.
	Also, more generally, we prove the statement for $\bigoplus_{\mathfrak{c}} \mathbb{Z}_2$ and not just $\bigoplus_{\aleph_1} \mathbb{Z}_2$.
	We also compute its complexity as $\Sigma^0_2$.
	
	\begin{theorem*}
		There is a cofinitary representation of $\bigoplus_{\mathfrak{\c}} \ZZ_2$.
	\end{theorem*}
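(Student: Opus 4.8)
The plan is to exhibit a single, very explicit copy of $\bigoplus_\c \ZZ_2$ inside $S_\omega$, parametrised by the whole Cantor space. First I would fix a recursive bijection identifying $\omega$ with the set $\fincantorspace$ of finite binary strings; this partitions $\omega$ into the finite blocks ${}^{n}2$ of strings of length $n$, each of size $2^n$ and carrying the group structure of $\FF_2^n$ under coordinatewise addition $\oplus$ mod $2$. For a point $v \in \cantorspace$ I then translate each block by the corresponding prefix of $v$, defining a permutation $\sigma_v$ of $\fincantorspace$ by
\[
  \sigma_v(s) = s \oplus (v \restr \sizeof{s}) \qquad \text{for } s \in \fincantorspace.
\]

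Next I would verify that $v \mapsto \sigma_v$ is an injective homomorphism of $(\cantorspace,\oplus)$ into $S_\omega$ with cofinitary image. On the block ${}^{n}2$ the map $\sigma_v$ is translation by $v\restr n$, so $\sigma_v \circ \sigma_w = \sigma_{v \oplus w}$; in particular each $\sigma_v$ is an involution and the image is an elementary abelian $2$-group. Injectivity is immediate, since $\sigma_v = \id$ forces $v\restr n = 0$ for all $n$. The crucial point is cofinitariness: if $v \ne 0$ and $k$ is least with $v(k)=1$, then $v\restr n \ne 0$ for every $n > k$, so $\sigma_v$ acts fixed-point-freely (as a nontrivial translation) on each block ${}^{n}2$ with $n > k$; its only fixpoints lie in the blocks of length at most $k$, a set of size $2^{k+1}-1$. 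Hence every nonidentity element of the image has only finitely many fixpoints.

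I expect this to be the only genuinely delicate step, and it is where the design of the construction matters. The naive idea of letting $\cantorspace$ act by coordinatewise swaps on blocks of bounded size fails badly: a $v$ with infinitely many zero coordinates would then fix infinitely many blocks, destroying cofinitariness. Using the cumulative prefixes $v\restr\sizeof{s}$ together with blocks isomorphic to $\FF_2^n$ repairs exactly this, since a single nonzero coordinate of $v$ propagates to all larger blocks and so forces fixed-point-freeness cofinitely. This prefix/translation trick is the heart of the matter.

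It then remains to identify the isomorphism type and the complexity. The image is isomorphic as an abstract group to $(\cantorspace,\oplus) = \prod_\omega \ZZ_2$, which is an $\FF_2$-vector space of cardinality $\c$ and hence of dimension $\c$; since $\bigoplus_\c \ZZ_2$ is likewise an $\FF_2$-vector space of dimension $\c$, the two are isomorphic, as any two vector spaces of equal dimension over the same field are. Finally I would read the complexity off the definition: a permutation $g$ belongs to the group iff, for every $n$, its restriction to ${}^{n}2$ is a translation and these translations cohere into a single $v \in \cantorspace$, which is an arithmetic condition. Indeed $v \mapsto \sigma_v$ is a continuous injection of the compact space $\cantorspace$, so the group is closed in $S_\omega$; in particular its complexity is at most $\Sigma^0_2$.
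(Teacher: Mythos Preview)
Your argument is correct and rests on the same core mechanism as the paper's: partition $\omega$ into finite blocks, equip each block with the regular action of a finite elementary abelian $2$-group, and act via prefixes so that any nonidentity element eventually acts as a nontrivial translation and hence fixed-point-freely on all large blocks. The difference is in the packaging. The paper indexes \emph{generators} by $\cantorspace$ and on block $n$ sends the generator $h_f$ to the generator $h_{f\restr n}$ of $H_n=\bigoplus_{s\in{}^n2}\ZZ_2$ (so blocks have size $2^{2^n}$), thereby realising $\bigoplus_{\cantorspace}\ZZ_2$ directly on the nose. You instead realise the full product $(\cantorspace,\oplus)=\prod_\omega\ZZ_2$ via $v\mapsto v\restr n$ acting on blocks $\FF_2^{\,n}$ of size $2^n$, and only afterwards invoke the (nonconstructive) vector-space dimension argument to identify $\prod_\omega\ZZ_2$ with $\bigoplus_\c\ZZ_2$. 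Your route buys a cleaner complexity computation---the whole group, not merely a generating set, is the continuous injective image of the compact space $\cantorspace$ and hence closed---whereas the paper's route makes the isomorphism type $\bigoplus_\c\ZZ_2$ explicit without appealing to a Hamel basis of $\prod_\omega\ZZ_2$.
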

	
	The third section will be concerned with isomorphism types of definable maximal cofinitary groups.
	This line of research stems from the construction of a Borel maximal cofinitary group by Horowitz and Shelah \cite{HorowitzShelah_2016}.
	Schrittesser and Mejak improved their construction to obtain an arithmetic maximal cofinitary group:
	
	\begin{theorem*}[Schrittesser, Mejak, \cite{MejakSchrittesser_2022}]
		There is a $\Pi^0_1$ generating set for a free maximal cofinitary group, which is also maximal as an eventually different family of permutations.
	\end{theorem*}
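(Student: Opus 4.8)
The plan is to realize such a generating set as the set of branches through a recursive, finitely-branching tree, so that being a generator is decided by a closed condition and hence $\Pi^0_1$. Since no countable cofinitary group is maximal, the free group we build must have rank $\c$, so we index the generators by reals: to each node $s$ of a recursive tree $T \subseteq \finseq{\omega}$ we assign a finite partial injection $p_s$ of $\omega$, monotone along $\subseteq$, so that each branch $x \in [T]$ yields a permutation $a_x = \bigcup_n p_{x \restr n}$, and we set $A = \set{a_x}{x \in [T]}$. Two distinct branches share only a finite initial segment, which is the feature we exploit to keep words mixing distinct generators cofinitary. Throughout we maintain the key invariant that every nontrivial reduced word $w$ in finitely many of the (formal) generators, once its constituent nodes have been activated, acquires no elements of $\fix(w)$ beyond a controlled finite set fixed at the stage of activation.

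The construction runs along a recursive bookkeeping enumerating three families of requirements. First, for each nontrivial reduced word $w$ we must guarantee $w \neq \id$ on every choice of distinct branches and that $\fix(w)$ is finite; freeness is then immediate and cofinitariness follows from the invariant. Second, for each index $e$ coding a permutation $h$ of $\omega$, we must ensure that either $h \in \langle A \rangle$ or $\langle A, h \rangle$ is not cofinitary, witnessing maximality as a cofinitary group. Third, again for each $h$, we must ensure that $h$ agrees infinitely often with some element of $\langle A \rangle$ unless $h \in \langle A \rangle$, witnessing that $\langle A \rangle$ is maximal as an eventually different family. The heart of the argument is a single extension lemma: given a good finite approximation and one requirement, one can recursively enlarge the domains and ranges of the active $p_s$ (and activate new nodes) so as to meet that requirement while preserving the invariant. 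For a word requirement one simply drives $w$ to move a fresh point; for cofinitary-maximality one follows the Horowitz--Shelah diagonalization, repeatedly forcing a new fixpoint of a fixed word of the form $g\,h$ (with $g$ ranging over group elements) so that across cofinally many substages this word accrues infinitely many fixpoints in $\langle A, h\rangle$; for eventually-different-maximality one repeatedly forces a chosen generator $a_x$ to agree with $h$ at a fresh point, yielding infinitely many agreements.

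Assembling these, we interleave the requirements and apply the extension lemma at each stage by a fixed recursive rule; the invariant persists, so every branch is a genuine permutation, every word is cofinitary and nontrivial, and every permutation $h$ is considered cofinally and is thereby either absorbed into $\langle A\rangle$ or defeated. Since all choices are recursive, $T$ is a recursive tree and membership in $A$ reduces to checking, for every $n$, that a candidate permutation matches one of the finitely many admissible patterns $p_s$ with $\sizeof{s} = n$ --- a $\Pi^0_1$ condition. I expect the main obstacle to be the extension lemma itself: one must carry out the maximality diagonalizations --- creating prescribed new fixpoints for cofinitary-maximality and new agreements for eventually-different-maximality --- without inadvertently creating fixpoints for any other reduced word, which would break cofinitariness or freeness. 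Managing this interference uniformly across the continuum-many branches, while keeping every decision recursive so that the final generating set is genuinely closed rather than merely Borel, is the crux that upgrades the Horowitz--Shelah construction to a $\Pi^0_1$ one.
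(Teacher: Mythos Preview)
The paper does not prove this theorem; it is quoted from \cite{MejakSchrittesser_2022} as an input to the paper's own results, so there is no in-paper proof to compare against. Your outline does describe the general architecture of the Horowitz--Shelah and Mejak--Schrittesser constructions (generators as branches of a recursive tree, finite partial injections assigned to nodes, an extension lemma preserving a no-new-fixpoints invariant), so the shape is right.

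There is, however, a genuine gap in the maximality argument. You write that the construction ``runs along a recursive bookkeeping enumerating three families of requirements'', the second and third being indexed by ``each index $e$ coding a permutation $h$ of $\omega$''. But there are $\c$-many permutations of $\omega$, not countably many, and no recursive construction in $\omega$ stages can enumerate them. A countable list of requirements can at best handle the \emph{recursive} (or arithmetical) permutations, which would yield maximality only in $L$-like contexts, not in $\mathsf{ZFC}$. The whole point of the Horowitz--Shelah breakthrough is that maximality is \emph{not} achieved by listing the potential intruders $h$; instead, one arranges that every permutation $h$ itself determines, via a coding map, a branch $x(h)$ of the tree, and the finite-level extension lemma is set up so that the generator $a_{x(h)}$ (or a short word in it) is forced to agree with $h$ infinitely often, regardless of what $h$ is. The continuum of maximality tasks is thus absorbed into the continuum of branches rather than into the stages. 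Your sketch never mentions this self-coding mechanism, and without it the argument collapses: the extension lemma at stage $n$ cannot ``force $a_x$ to agree with $h$ at a fresh point'' for a specific $h$ when $h$ is not available to the construction at all. This is precisely the crux that separates a Borel construction requiring a well-ordering of the reals from a genuinely $\Pi^0_1$ one, and it is missing from your plan.
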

	
	In their subsequent discussion, they note that all currently known constructions of definable maximal cofinitary groups yield groups decomposing into free products.
	We prove that this is not always the case due to the following theorem:

	\begin{theorem*}
		Let $G \actson \omega$ be cofinitary and maximal as an eventually different family of permutations and $F$ be a finite group.
		Then $G \times F \actson \omega \times F$ is cofinitary and maximal as an eventually different family of permutations.
	\end{theorem*}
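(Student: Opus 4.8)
The plan is to treat the two claims separately for the natural action $(g,a)\cdot(n,b)=(g(n),ab)$ of $G\times F$ on the countable set $\omega\times F$. The cofinitary claim is a direct fixpoint computation: a point $(n,b)$ is fixed by $(g,a)$ precisely when $g(n)=n$ and $ab=b$. If $a\neq 1_F$ there are no fixpoints at all, while if $a=1_F$ and $g\neq e$ the fixpoint set is $\mathrm{Fix}(g)\times F$, which is finite since $G$ is cofinitary. Hence every nontrivial element of $G\times F$ has finitely many fixpoints, so $G\times F$ is cofinitary and is in particular an eventually different family; the real content of the theorem is its maximality.

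For maximality I would argue by contradiction. Suppose $\pi$ is a permutation of $\omega\times F$ that is eventually different from every element of $G\times F$, and write $\pi(n,b)=(\pi_1(n,b),\pi_2(n,b))$. The goal is to manufacture from $\pi$ a single permutation $\sigma$ of $\omega$ that is eventually different from every element of $G$; such a $\sigma$ cannot lie in $G$ (no permutation is eventually different from itself), so $G\cup\simpleset{\sigma}$ would be a strictly larger eventually different family, contradicting the maximality of $G$. The first step is the key computation, and it is here that finiteness of $F$ enters: for each fixed $b\in F$ the function $n\mapsto\pi_1(n,b)$ is eventually different from every $g\in G$. Indeed, if $\pi_1(n,b)=g(n)$ then $\pi(n,b)=(g(n),ab)$ for $a=\pi_2(n,b)\,b^{-1}$, so the set of such $n$ is contained in the union, over the finitely many $a\in F$, of the (finite) agreement sets of $\pi$ with $(g,a)$, hence is finite.

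The second step turns these $|F|$ functions into one bijection. Consider the bipartite multigraph on two copies of $\omega$ whose edges are the pairs $\bigl(n,\pi_1(n,b)\bigr)$ for $(n,b)\in\omega\times F$. Each left vertex $n$ has degree $|F|$, and each right vertex $m$ has degree $|\pi^{-1}(\simpleset{m}\times F)|=|F|$ because $\pi$ is a bijection; thus the multigraph is $|F|$-regular and locally finite. Such a graph admits a perfect matching (e.g.\ by König's edge-colouring theorem for bipartite multigraphs together with a compactness argument, or by a direct back-and-forth using Hall's condition, which holds by regularity). A perfect matching selects for each $n$ a label $c(n)\in F$ and defines a bijection $\sigma(n):=\pi_1(n,c(n))$ of $\omega$. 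By the computation above, $\set{n}{\sigma(n)=g(n)}\subseteq\bigcup_{b\in F}\set{n}{\pi_1(n,b)=g(n)}$ is finite for every $g\in G$, so $\sigma$ is eventually different from all of $G$, which is the desired contradiction.

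I expect the main obstacle to be precisely the passage from the $|F|$ column functions $n\mapsto\pi_1(n,b)$ to a genuine \emph{bijection} of $\omega$: these functions need be neither injective nor surjective, so one must exploit that $\pi$ is a bijection (which is exactly what forces the multigraph to be regular) and invoke an infinite matching/edge-colouring theorem. The eventual-difference bookkeeping, by contrast, is routine once one observes that summing over the finite group $F$ keeps all the relevant agreement sets finite, so any section $c\colon\omega\to F$ whatsoever yields a map eventually different from $G$, and the only delicate requirement is bijectivity.
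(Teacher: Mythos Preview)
Your proof is correct and uses the same two ingredients as the paper: the $|F|$-regular bipartite matching lemma to extract a genuine permutation of $\omega$ from the first-coordinate data of $\pi$, and the finiteness of $F$ to control agreement sets. The only difference is organizational: the paper argues directly (match first to get $\tilde h$, invoke maximality of $G$ to find $g$ agreeing with $\tilde h$ infinitely often, then pigeonhole on the second coordinate to pin down some $\tilde f\in F$ with $h=^{\infty}(g,\tilde f)$), whereas you argue contrapositively and front-load the finiteness-of-$F$ bookkeeping, which lets you skip the final pigeonhole step entirely---any matching-selected section already works.
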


	We may combine both theorems to obtain the following result:
	
	\begin{corollary*}
		Let $F$ be a finite group.
		Then, there is $\Pi^0_1$-generating set for a maximal cofinitary group isomorphic to $(\bigast_\c \ZZ) \times F$.
		Thus, the group itself has complexity $\Sigma^0_2$.
	\end{corollary*}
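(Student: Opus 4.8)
The plan is to assemble the corollary directly from the two preceding theorems, feeding the Schrittesser--Mejak group into the product construction. First I would invoke the Schrittesser--Mejak theorem to fix a $\Pi^0_1$ set $A \subseteq S_\omega$ that generates a maximal cofinitary group $G = \langle A \rangle$ which is moreover maximal as an eventually different family of permutations and free as an abstract group. Since $G$ is maximal cofinitary it is uncountable, so $A$ must be uncountable; by the perfect set property for closed sets $\sizeof{A} = \c$, whence $\sizeof{G} = \c$ and, $G$ being free, its free rank is $\c$. Thus $G \cong \bigast_\c \ZZ$.

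Next I would apply the preceding theorem (the product construction) to this $G$ and the given finite group $F$, obtaining that $G \times F \actson \omega \times F$ is cofinitary and maximal as an eventually different family of permutations, with $G$ acting on the first coordinate and $F$ by translation on the second. Because $F$ is finite, $\omega \times F$ is countably infinite, so I would fix a recursive bijection $\phi \colon \omega \times F \to \omega$ and transport the action along $\phi$ to realize $G \times F$ as a subgroup of $S_\omega$. Relabelling the underlying set via $\phi$ preserves both pairwise eventual difference and maximality of the family, so the image is again maximal as an eventually different family.

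The bridging observation is that maximality as an eventually different family implies maximality as a cofinitary group: if $H \supsetneq G \times F$ were cofinitary, then for any $h \in H \setminus (G \times F)$ the set $(G \times F) \cup \simpleset{h} \subseteq H$ would still be eventually different, contradicting maximality of the family. Hence the transported group is a maximal cofinitary group, and as an abstract group it is $(\bigast_\c \ZZ) \times F$, which is the claimed isomorphism type. For the generating set I would take the pushforward along $\phi$ of $\set{(g,e)}{g \in A} \cup \set{(e,s)}{s \in S}$, where $S$ is a fixed finite generating set of $F$. The condition that a permutation fixes the second coordinate is closed; on that set the first coordinate recovers $g$ continuously, and "$g \in A$" is $\Pi^0_1$; adjoining finitely many explicit permutations and composing with the recursive $\phi$ raises neither, so this is a $\Pi^0_1$ generating set. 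The $\Sigma^0_2$ bound on the group then follows exactly as for the Schrittesser--Mejak group, the passage to $G \times F$ having only multiplied the underlying set by the finite factor $\sizeof{F}$.

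The combinatorial heart --- that the product is cofinitary and maximal as an eventually different family --- is already supplied by the preceding theorem, so the corollary is essentially an exercise in transporting definability through the product and the coordinate identification. The step I expect to require the most care is this definability bookkeeping: one must verify that forming $G \times F$ and relabelling via $\phi$ genuinely preserves the $\Pi^0_1$ character of the generators and the $\Sigma^0_2$ character of the group, without surreptitiously introducing a quantifier over reals, so that the final complexity matches that of the Schrittesser--Mejak construction.
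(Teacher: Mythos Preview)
Your proposal is correct and follows essentially the same route as the paper: invoke Schrittesser--Mejak, feed the resulting group into the product theorem, and then transport along a recursive bijection $\omega \times F \to \omega$ (the paper uses $\sizeof{F}n + m$) while checking that the $\Pi^0_1$/$\Sigma^0_2$ complexities survive. You have in fact spelled out several details the paper leaves implicit --- the perfect-set argument for the free rank, the bridging from maximality as an eventually different family to maximality as a cofinitary group, and the explicit form of the generating set --- all of which are handled correctly.
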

	
	The result follows by coding $\omega \times F$ in $\omega$ by elements of the form $\left|F\right|n + m$ with $m < \left|F\right|$.
	It is also easy to see how to get the generators without increasing the complexity.
	Finally, this result really yields groups not decomposing into free products: For example $(\bigast_\c \ZZ) \times \ZZ_2$ has non-trivial center, whereas any free product of non-trivial groups has trivial center.
	Also, we note that both presented proofs crucially use Cayley's theorem:
	
	\begin{theorem*}[Cayley's theorem]
		The left regular action of every group on itself is free.
	\end{theorem*}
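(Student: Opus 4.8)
The plan is to unwind the definition of the left regular action and reduce freeness to the cancellation law available in any group. Recall that for a group $G$ the left regular action $G \actson G$ is given by $g \cdot x = gx$, where on the right-hand side we use the group operation of $G$. Although it is not strictly needed for the statement, I would first record that this prescription genuinely defines an action: the neutral element $e$ satisfies $e \cdot x = ex = x$ for all $x$, and associativity gives $g \cdot (h \cdot x) = g(hx) = (gh)x = (gh) \cdot x$, so the map $g \mapsto (x \mapsto gx)$ is a homomorphism into $\Sym(G)$.

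The substance is then to verify freeness, i.e.\ that every $g \in G \setminus \simpleset{e}$ acts without fixpoints. To this end I would argue contrapositively: suppose some $g \in G$ fixes a point $x \in G$, so that $g \cdot x = x$, which by definition means $gx = x$. The single key step is to right-multiply both sides by the inverse $x^{-1}$, which exists precisely because $G$ is a group; using associativity this yields $g = g(xx^{-1}) = (gx)x^{-1} = xx^{-1} = e$. Hence the only element of $G$ possessing any fixpoint is $e$ itself, which is exactly the assertion that the action is free.

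There is no genuine obstacle here: the entire content is the existence of inverses together with associativity, packaged as the right-cancellation law $gx = x \Rightarrow g = e$. The only point worth flagging is interpretive rather than technical, namely that "free" must be read in the strong sense that a non-identity element fixes no point at all (equivalently, all point stabilizers are trivial). This strong form is precisely what makes the left regular action relevant in the cofinitary setting, since freeness guarantees that every non-identity element moves all but finitely many—indeed, all—points, and the cancellation computation above delivers exactly this.
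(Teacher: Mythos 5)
Your proof is correct: the cancellation argument ($gx = x \Rightarrow g = e$ by right-multiplying with $x^{-1}$) is the standard and essentially only way to verify that the left regular action is free, and your remark that ``free'' here means all point stabilizers are trivial matches exactly the sense in which the paper uses the theorem. The paper itself states this as a classical fact without proof, so there is nothing to compare beyond noting that your argument is the canonical one.
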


	\section{A cofinitary representation of $\bigoplus_{\c} \ZZ_2$}

	We begin with the proof of an arithmetic cofinitary representation of $\bigoplus_{\c} \ZZ_2$.
	Remember that $\bigoplus_{\c}\ZZ_2$ has the following group representation:
	
	\begin{remark}
		The group $\bigoplus_{\mathfrak{\c}} \ZZ_2$ can be represented as the free group in $\c$-many generators modulo the relations $a^2$ and $ab = ba$ for all generators $a, b$.
		Hence, for any set $I$ we may define an action of $\bigoplus_{\mathfrak{\c}} \ZZ_2$ on $I$ by defining it on the set of generators and verifying the two types of relations above.
	\end{remark}

	\begin{theorem}
		There is a cofinitary representation of $\bigoplus_{\mathfrak{\c}} \ZZ_2$.
	\end{theorem}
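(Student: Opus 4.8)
The plan is to realize $\bigoplus_\c \ZZ_2$ as a group of \emph{block translations} on a countable set, exploiting Cayley's theorem on each block. Decompose the ground set as $\Omega = \bigsqcup_{n<\omega} B_n$, where, writing $L_n$ for the set of binary strings of length $n$ (so $\left|L_n\right| = 2^n$, the $n$-th level of $\fincantorspace$), we set $B_n := \FF_2^{L_n}$, a finite $\FF_2$-vector space of order $2^{2^n}$. Let $\Gamma := \prod_n B_n$ act on $\Omega$ by translation: $v \cdot p := p + v_n$ for $p \in B_n$. For $v = (v_n)_n$ and $p \in B_n$ one has $v \cdot p = p$ iff $v_n = 0$, so the fixpoint set of $v$ is $\bigsqcup_{\{n : v_n = 0\}} B_n$, which is finite iff $\{n : v_n = 0\}$ is finite (each $B_n$ being finite). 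This is where Cayley enters: on each block the action is the free left regular action of $(B_n,+)$ on itself, so a nonzero coordinate $v_n$ contributes no fixpoints. Hence an element of $\Gamma$ is cofinitary precisely when it is nonzero in all but finitely many blocks.

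Next, for each branch $x \in \cantorspace$ I would define a generator $g_x \in \Gamma$ by $(g_x)_n := \mathbf 1_{x \restr n} \in B_n$, the indicator of the node $x \restr n \in L_n$. Each $g_x$ is an involution (since $2 g_x = 0$ in the elementary abelian group $\Gamma$) and any two commute as $\Gamma$ is abelian. By the Remark, fixing an enumeration $\cantorspace = \set{x_\alpha}{\alpha < \c}$ and sending the $\alpha$-th generator of $\bigoplus_\c \ZZ_2$ to $g_{x_\alpha}$ then extends to a group homomorphism $\rho : \bigoplus_\c \ZZ_2 \to \Gamma \le \Sym(\Omega)$, because the defining relations $a^2$ and $ab = ba$ are verified.

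It remains to check that $\rho$ is injective with cofinitary image, and for this it suffices to prove the single key claim: for any $k \geq 1$ and any distinct branches $x_1, \dots, x_k$, the element $g_{x_1} + \dots + g_{x_k}$ is nonzero in all but finitely many blocks. This is the heart of the argument and the main obstacle, namely the step upgrading ``each generator is cofinitary'' to ``every group element is cofinitary.'' I would prove it by separation of branches: choose $N$ larger than every pairwise splitting level of $x_1,\dots,x_k$, so that for $n \geq N$ the nodes $x_1 \restr n, \dots, x_k \restr n \in L_n$ are pairwise distinct; then $(g_{x_1} + \dots + g_{x_k})_n = \sum_i \mathbf 1_{x_i \restr n}$ has exactly $k \geq 1$ nonzero coordinates, hence is nonzero. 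Thus the set of zero blocks is contained in $\{0, \dots, N-1\}$.

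Granting the claim, every nontrivial element of $\bigoplus_\c \ZZ_2$ is a sum $\sum_{x \in S} g_x$ over a finite nonempty $S \subseteq \cantorspace$, so it maps to a $\Gamma$-element nonzero in cofinitely many blocks; in particular its image is $\neq \id$, whence $\rho$ is injective, and it is cofinitary by the fixpoint computation of the first paragraph. Since $\Omega$ is countable, the image is a cofinitary subgroup of $S_\omega$ isomorphic to $\bigoplus_\c \ZZ_2$, as desired. I note that the growing block dimensions $\left|L_n\right| = 2^n$ are essential: with blocks of bounded size the claim fails (for instance with two-element blocks, two ``eventually nonzero'' vectors sum to an ``eventually zero'' one), and it is exactly the expanding levels of $\fincantorspace$ that leave room for arbitrarily many branches to separate. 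Finally, since $x \mapsto g_x$ is recursive under a suitable coding of $\Omega$ as $\omega$, the resulting group is arithmetic; membership can be described in the form $\exists k\, \forall^\infty n\,(\dots)$ — the supports $\supp(v_n)$ eventually trace out a fixed finite number of coherent branches — which yields the complexity bound $\Sigma^0_2$.
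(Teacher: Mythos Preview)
Your proof is correct and is essentially the same construction as the paper's: your blocks $B_n = \FF_2^{L_n}$ are exactly the paper's $H_n = \bigoplus_{s \in {^n 2}} \ZZ_2$, your generator $(g_x)_n = \mathbf 1_{x\restr n}$ is the paper's $h_{f\restr n}$ acting by left translation (Cayley), and your branch-separation argument for cofinitariness is identical. The only differences are cosmetic---vector-space language in place of direct-sum-of-$\ZZ_2$ language, and your explicit use of the ambient product $\Gamma = \prod_n B_n$ as an intermediate object.
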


	\begin{proof}
		Let $H := \bigoplus_{\mathfrak{\c}} \ZZ_2$ be generated by $\set{h_f}{f \in \cantorspace}$.
		Inductively, we will define an interval partition $\seq{I_n}{n < \omega}$ of $\omega$ and for every real $f \in 2^\omega$ an action of $h_f$ on $I_n$.
		
		Given $n < \omega$ let $H_n := \bigoplus_{s \in {^n 2}} \ZZ_2$ be generated by $\set{h_s}{s \in {^n 2}}$.
		Further, let $I_n$ be the interval above $\bigcup_{m < n} I_m$ of size $\left|H_n\right|$.
		$H_n$ acts freely on itself, so also on $I_n$.
		Thus, for further computations we may identify $I_n$ with $H_n$.
		For every $f \in \cantorspace$ let the action of $h_f$ on $H_n$ be defined as the action of $h_{f \restr n}$ on $H_n$.
		First, we show that this generates a well-defined group action of $H$ on $H_n$, for if $f \in \cantorspace$ and $h \in H_n$, then we compute
		$$
			h_f.(h_f.h) = h_{f \restr n}.(h_{f \restr n}.h) = (h_{f \restr n} \circ h_{f \restr n}).h = e.h = h.
		$$
		Now, let $f,g \in \cantorspace$ and $h \in H_n$.
		Then we compute
		$$
			h_f.(h_g.h) = h_{f \restr n}.(h_{g \restr n}.h) = (h_{f \restr n} \circ h_{g \restr n}).h = (h_{g \restr n} \circ h_{f \restr n}).h = h_{g \restr n}.(h_{f \restr n}.h) = h_g.(h_f.h).
		$$
		By the previous remark this suffices.
		Now, we define a group action of $H$ on $\omega$ for $f \in \cantorspace$ by
		$$
			h_f.k := h_{f \restr n}.k, \quad \text{ where } k \in I_n.
		$$
		We already verified that all group actions of $H$ on $I_n$ are well-defined, so we obtain a well-defined group action of $H$ on $\omega = \bigcup_{n < \omega} I_n$.
		It remains to verify that the action is cofinitary.
		So let $h \in H$.
		Choose $F_0 \subseteq \cantorspace$ finite such that $h = \sum_{f \in F_0} h_f \neq e$.
		Choose $N < \omega$ such that for all $n > N$ and $f \neq g \in F_0$ we have $f \restr n \neq g \restr n$.
		We finish the proof by showing that for $n > N$ we have that $h$ acting on $H_n$ has no fixpoints.
		
		But on one hand, by choice of $N$ we have $\sum_{f \in F_0} h_{f \restr n} \neq e$.
		On the other hand, $H_n$ acts freely on itself which implies that the action of $\sum_{f \in F_0} h_{f \restr n}$ on $H_n$ has no fixpoints.
		But $h = \sum_{f \in F_0} h_f$ acts on $H_n$ the same way $\sum_{f \in F_0} h_{f \restr N}$ does, so that also $h$ acting on $H_n$ has no fixpoints.
	\end{proof}

	\begin{remark}
		Note, that since $\bigoplus_{\mathfrak{c}} \mathbb{Z}_2$ is abelian, by Blass' Corollary above it cannot have a \underline{maximal} cofinitary representation.
		One may also directly observe that the representation in the proof above is not maximal as the action of $H$ has infinitely many orbits, namely the $I_n$'s.
	\end{remark}
	
	\begin{remark}
		We compute the complexity of the cofinitary representation of $\bigoplus_{\c} \ZZ_2$ above as $\Sigma^0_2$, by showing that the action of the generating set above is a closed subset of ${^\omega \omega}$.
		We call this generating set $\Gamma$ and may define it by:
		\[
			g \in \Gamma \iff \forall n \in \omega : g \restr I_n \text{ acts like some } h_s \text{ on } I_n.
		\]
		Since everything to the right of the universal quantifier only happens on a finite set, it can be expressed by bounded quantifiers, and thus does not add any complexity.
	\end{remark}
	
	\section{Isomorphism types of maximal cofinitary groups}
	
	In this section, for any finite group $F$ we will construct an arithmetic maximal cofinitary group of isomorphism type $(\bigast_\c \ZZ) \times F$.
	The idea of the construction comes from the following recent result by Millhouse and the author:
	
	\begin{theorem*}[S., Millhouse, 2025]
		If there is a $\Sigma^1_2$ generating set for a free maximal cofinitary group, which is also maximal as an eventually different family of permutations, then there also is a $\Pi^1_1$ generating set for a maximal cofinitary group of the same size, which is also maximal as an eventually different family of permutations.
	\end{theorem*}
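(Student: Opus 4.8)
The plan is to drop the projective complexity by one level, absorbing the existential real quantifier hidden in the $\Sigma^1_2$ definition directly into the permutations, while keeping the cardinality fixed by means of effective uniformization. Since every $\Sigma^1_2$ set is the projection of a $\Pi^1_1$ set, I would first write the given generating set as
\[
	\mathcal{G} = \set{g \in \Sym(\omega)}{\exists y \in \cantorspace \ P(g,y)}
\]
for a suitable $\Pi^1_1$ relation $P \subseteq \Sym(\omega) \times \cantorspace$. By the Kond\^o $\Pi^1_1$-uniformization theorem I may then shrink $P$ to a $\Pi^1_1$ set $P^\ast \subseteq P$ with the same projection $\mathcal{G}$ which is the graph of a partial function; thus every $g \in \mathcal{G}$ is assigned a \emph{unique} witness $y_g$ with $(g,y_g) \in P^\ast$. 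This uniqueness is exactly what will prevent the cardinality from growing.

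Next I would fix once and for all a Borel coding map $\Phi$ taking a permutation $g$ of $\omega$ together with a real $y$ to a permutation $\Phi(g,y)$ of $\omega$, where $\omega$ is re-indexed through a fixed recursive bijection $\omega \cong \omega \sqcup (\omega \times \omega)$ into a \emph{main part} and a \emph{witness part}. On the main part $\Phi(g,y)$ should act as $g$, while on the witness part it should act by a fixed-point-free pattern from which $y$ can be recovered continuously. I would require $\Phi$ to be injective with Borel range and with Borel decoding maps $z \mapsto \pi(z)$, $z \mapsto \upsilon(z)$ returning the main permutation and the encoded real. Setting
\[
	\hat{\mathcal{G}} := \set{\Phi(g,y)}{(g,y) \in P^\ast},
\]
the assignment $g \mapsto \Phi(g, y_g)$ is a bijection of $\mathcal{G}$ onto $\hat{\mathcal{G}}$, so $\sizeof{\hat{\mathcal{G}}} = \sizeof{\mathcal{G}}$, and since
\[
	z \in \hat{\mathcal{G}} \iff z \in \ran(\Phi) \ \wedge\ (\pi(z),\upsilon(z)) \in P^\ast
\]
is the conjunction of a Borel and a $\Pi^1_1$ condition, the set $\hat{\mathcal{G}}$ is $\Pi^1_1$, as desired.

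It remains to verify that $\hat G := \langle \hat{\mathcal{G}} \rangle$ is cofinitary and maximal as an eventually different family of permutations; recall that a cofinitary group is automatically such a family, and that a cofinitary group maximal as an eventually different family is in particular a maximal cofinitary group, so these two properties suffice. Cofinitariness on the main part is immediate: every reduced word in $\hat{\mathcal{G}}$ restricts there to the corresponding word in $\mathcal{G}$, which is nontrivial (as $\mathcal{G}$ freely generates) and hence has finitely many fixpoints since $G$ is cofinitary; the witness-part pattern must be chosen so that nonempty reduced words have finitely many fixpoints there as well, which forces the witness encoding to be ``free enough'' that no nontrivial word degenerates on infinitely many coordinates. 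For maximality I would argue contrapositively: from a permutation $p \notin \hat G$ with $\hat G \cup \simpleset{p}$ eventually different I would decode an eventually different extension of $\mathcal{G}$, contradicting its maximality as an eventually different family.

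I expect this last point to be the main obstacle, where two difficulties intertwine. First, the witness encoding must be both continuously readable (for the $\Pi^1_1$ bound) and non-degenerate (so that cofinitariness survives passing to words); a naive block code fails the latter, as generic words act trivially on infinitely many blocks. Second, and more seriously, enlarging the domain leaves ``room'' on the witness part for a hypothetical extra permutation $p$ that mixes the two parts, so that decoding $p$ to the main part need not yield a permutation at all. Overcoming this requires arranging the witness-part action of $\hat G$ to be itself maximal as an eventually different family and tightly coupled to the main action, so that eventual difference on all of $\omega$ already forces it on the main part. This is precisely where the hypothesis that $\mathcal{G}$ is maximal \emph{as an eventually different family}, rather than merely a maximal cofinitary group, is indispensable: the robust pairwise nature of eventual difference is what drives the transfer, exactly as in the $G \times F$ construction above, and Cayley's theorem enters there too, keeping the auxiliary actions free and hence cofinitary.
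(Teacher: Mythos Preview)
The paper does not prove this theorem: it is cited from forthcoming work, and the only information supplied is that the argument ``essentially constructs a maximal cofinitary group of isomorphism type $G \times \ZZ_2$'' with ``some additional coding taking place''. So there is no detailed proof here to compare against, only this structural hint.

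Your opening moves---writing the $\Sigma^1_2$ set as a projection of a $\Pi^1_1$ set, applying Kond\^o uniformization, and absorbing the unique witness $y_g$ into the permutation---are sound and almost certainly match how the cited argument begins. The divergence, and the genuine gap, is in the coding map $\Phi$. You split $\omega$ into a main part carrying $g$ and a witness part carrying some unspecified pattern encoding $y_g$; if this worked, $\hat G$ would again be free on $\hat{\mathcal G}$, hence isomorphic to $G$, \emph{not} to $G \times \ZZ_2$. But you never actually construct such a $\Phi$: you list the constraints it must meet (continuously readable, nondegenerate under reduced words, and---the hard one---rich enough on the witness part that a hypothetical $p$ mixing the two parts can still be decoded) and then stop. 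That last constraint is precisely where the sketch fails to be a proof. The permutations you place on the witness part are determined by essentially arbitrary reals $y_g \in \cantorspace$, and nothing in your setup forces them to generate a maximal eventually different family there; without that, you cannot extract from a part-mixing $p$ any permutation of the main copy of $\omega$, and the intended contradiction with maximality of $G$ never materializes.

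The paper's hint indicates how the cited proof closes this gap: instead of an ad hoc witness action, the second copy carries the \emph{same} $G$-action, so that the ambient group is $G \times \ZZ_2 \actson \omega \times 2$, which the paper's product theorem (via the Hall-matching lemma) shows is again maximal as an eventually different family; the coding of $y_g$ is then layered onto this in a way that exploits the free generating set, which is exactly why the paper remarks that its more general $G \times F$ statement does not subsume the original argument. Your final paragraph correctly senses that Cayley's theorem and the $G \times F$ machinery are the missing ingredients, but invoking them by analogy is not yet a construction: to turn your outline into a proof you would have to reorganize it so that the auxiliary part already carries a max-e.d.\ action, and the natural way to do that leads to the $G \times \ZZ_2$ picture rather than the free one you sketched.
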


	That proof essentially constructs a maximal cofinitary group of isomorphism type $G \times \ZZ_2$, where $G$ is a freely generated maximal cofinitary group.
	In our context here, we may generalize the construction in several ways: We show that we may take the product with any finite group instead of $\ZZ_2$, and we do not need to assume that $G$ is freely generated.
	Further, we directly work with the whole group instead of generating sets.
	However, this more general setting does not apply to the context of the original proof as there some additional coding is taking place.
	Towards the proof of the theorem, we start with the following simple observation:
	\begin{lemma}
		Let $G \actson X$ be cofinitary and $H \actson Y$ be free and $Y$ be finite.
		Then the induced action $G\times H \actson X \times Y$ is cofinitary.
	\end{lemma}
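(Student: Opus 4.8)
The plan is to analyze the fixpoints of an arbitrary element of $G \times H$ acting on $X \times Y$ and show there are only finitely many. First I would recall how the induced (product) action is defined: for $(g,h) \in G \times H$ and $(x,y) \in X \times Y$, we set $(g,h).(x,y) = (g.x, h.y)$. Thus a point $(x,y)$ is a fixpoint of $(g,h)$ exactly when $g.x = x$ and $h.y = y$ simultaneously. So the fixpoint set of $(g,h)$ is precisely the product $\mathrm{fix}(g) \times \mathrm{fix}(h)$, where $\mathrm{fix}$ denotes the fixpoint set within the respective action.

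Next I would verify the cofinitary condition, i.e.\ that every non-identity element has only finitely many fixpoints. Let $(g,h) \neq (e_G, e_H)$. I would split into two cases according to whether $h = e_H$ or $h \neq e_H$. If $h \neq e_H$, then since $H \actson Y$ is free, $h$ has \emph{no} fixpoints in $Y$, so $\mathrm{fix}(h) = \emptyset$ and hence $\mathrm{fix}(g) \times \mathrm{fix}(h) = \emptyset$ is certainly finite. If $h = e_H$, then necessarily $g \neq e_G$, and since $G \actson X$ is cofinitary, $g$ has only finitely many fixpoints, so $\mathrm{fix}(g)$ is finite; combined with $\mathrm{fix}(h) = Y$ (since $e_H$ fixes everything), the fixpoint set is $\mathrm{fix}(g) \times Y$, which is finite because $Y$ is finite and $\mathrm{fix}(g)$ is finite.

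This covers all non-identity elements and shows the fixpoint set is finite in each case, completing the argument. I do not expect any genuine obstacle here: the lemma is purely a bookkeeping verification, and the two hypotheses — freeness of $H \actson Y$ and cofiniteness of $G \actson X$ — are each used in exactly one of the two cases. The only mild subtlety worth flagging is why finiteness of $Y$ is needed at all: it is used precisely in the case $h = e_H$, where the $Y$-factor of the fixpoint set is all of $Y$ rather than empty, so finiteness of $Y$ is what keeps $\mathrm{fix}(g) \times Y$ finite. Without it the product with an infinite $Y$ could fail to be cofinitary, so I would make sure to invoke the finiteness of $Y$ explicitly in writing up that case.
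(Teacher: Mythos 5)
Your proof is correct and follows essentially the same two-case argument as the paper: if $h \neq e$ freeness kills all fixpoints, and if $h = e$ the fixpoint set is $\mathrm{fix}(g) \times Y$, finite by cofinitarity of $G$ and finiteness of $Y$. No issues.
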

	
	\begin{proof}
		The induced group action $G\times H \actson X\times Y$ is given by
		\[
		(g,h).(x,y) := (g.x,h.y).
		\]
		Let $(g,h) \in G \times H \setminus \simpleset{(e,e)}$.
		If $h \neq e$, then $h.y \neq y$ for all $y \in Y$ as $H \actson Y$ is free.
		Thus, in this case $(g,h)$ has no fixpoints either.
		Now, assume that $h = e$.
		Then $(g,e).(x,y) = (g.x,y) = (x,y)$ exactly iff $g.x = x$. 
		Thus, $\fix((g,e)) = \fix(g) \times Y$ is finite, as $g$ is cofinitary and $Y$ is finite.
	\end{proof}
	
	Thus, it is already clear that we will define the cofinitary action of $G \times F \actson \omega \times F$ as the induced product action of $G \actson \omega$ and the left regular action $F \actson F$.
	The difficult part is proving that the product action $G \times F \actson \omega \times F$ is maximal if the action $G \actson \omega$ is maximal.
	In fact, the author does not know if this is true (see discussion in the last section); instead we assume that $G \actson \omega$ is maximal as an eventually different family of permutations.
	This is fine, as all known constructions of definable maximal cofinitary groups yield this stronger type of maximality.
	
	Towards maximality, we will need to use some infinite graph theory as a tool.
	For a reference of all notions and results used here, see the infinite graph theory chapter in \cite{Diestel_2017}.
	Remember, a perfect matching $P$ in a graph $G$ is a subset of the edges of $G$ such that every vertex of $G$ is incident to exactly one edge in $P$. $(A,B)$ is called a bipartition of $G$ iff they partition its vertex set and $G$ has only edges between $A$ and $B$.
	For finite bipartite graphs Hall's marriage theorem gives a necessary and sufficient condition for perfect matchings to exist.
	The theorem does not generalize to all countable infinite graphs, but it does to locally finite graphs:
	
	\begin{theorem*}
		Let $G$ be a locally finite graph with bipartition $(A, B)$, satisfying
		\begin{enumerate}
			\item For every finite subset $S \subseteq A$ we have $\left|\text{N}(S)\right| \geq \left|S\right|$,
			\item For every finite subset $S \subseteq B$ we have $\left|\text{N}(S)\right| \geq \left|S\right|$.
		\end{enumerate}
		Then $G$ has a perfect matching.
	\end{theorem*}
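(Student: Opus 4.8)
The plan is to reduce the theorem to the finite marriage theorem of Hall via a compactness argument, which produces two \emph{one-sided} matchings, and then to stitch these together into a perfect matching.

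First I would establish the one-sided statement that condition (1) together with local finiteness yields a matching $M_A$ saturating $A$. Applying (1) to singletons shows $\text{N}(\{a\}) \neq \emptyset$, so every $a \in A$ has a nonempty finite neighbourhood $\text{N}(a)$. Equip each finite set $\text{N}(a)$ with the discrete topology and form the product $\Omega = \prod_{a \in A} \text{N}(a)$, which is compact by Tychonoff's theorem; in particular no countability hypothesis is needed. A matching saturating $A$ is exactly an injective element of $\Omega$, reading $\phi(a) \in \text{N}(a)$ as the partner of $a$. For each pair $a \neq a'$ in $A$ let $C_{a,a'} = \set{\phi \in \Omega}{\phi(a) \neq \phi(a')}$; this is clopen and depends on only two coordinates, and an element of $\Omega$ is injective iff it lies in $\bigcap_{a \neq a'} C_{a,a'}$. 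By compactness it suffices to verify the finite intersection property. A finite subfamily mentions vertices from some finite set $A_0 \subseteq A$; condition (1) restricted to subsets of $A_0$ is precisely Hall's condition for the finite bipartite graph on $A_0$ and its neighbourhood, so the finite marriage theorem provides an injective partial matching of $A_0$, which—extended arbitrarily using $\text{N}(a) \neq \emptyset$—gives a point of $\Omega$ in the chosen subfamily. Hence $\bigcap C_{a,a'} \neq \emptyset$ and any element is the desired $M_A$. By the symmetric argument, condition (2) and local finiteness yield a matching $M_B$ saturating $B$.

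The heart of the proof is combining $M_A$ and $M_B$. Consider the subgraph $H$ whose edge set is $M_A \cup M_B$. Since each of $M_A, M_B$ is a matching, every vertex is incident to at most one edge of each, so $H$ has maximum degree $2$; hence every connected component of $H$ is a path (finite, one-sided infinite, or doubly infinite) or a finite cycle. No vertex is isolated in $H$, since each $a \in A$ is covered by $M_A$ and each $b \in B$ by $M_B$. Because $G$ is bipartite, along any component the edges alternate strictly between $M_A$ and $M_B$, and the crucial observation is that an endpoint of a path lying in $A$ is incident to its $M_A$-edge and to no $M_B$-edge, while an endpoint in $B$ is incident to its $M_B$-edge only. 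I would use this to show every finite path component has an even number of vertices: otherwise its two endpoints lie in the same part, so the endpoint condition forces the first and last edges into the same matching, while the alternation over the resulting even number of edges forces them into different matchings—a contradiction. Consequently each component admits a perfect matching of its own vertices—alternate edges of an even path or even cycle, and alternate edges of a ray or doubly infinite line started at the appropriate edge—and the union of these over all components is a perfect matching of $G$.

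The main obstacle I expect is precisely this parity bookkeeping in the combining step. Both Hall conditions (1) and (2) are indispensable exactly here: they are what guarantee that every vertex is saturated by the \emph{appropriate} matching, and thereby force the endpoint and length parities that rule out odd finite path components (which could not be perfectly matched). The compactness step, by contrast, is routine once the finite intersection property is traced back to the finite marriage theorem.
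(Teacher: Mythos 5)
The paper does not actually prove this statement: it imports it as a known result, pointing to the infinite graph theory chapter of \cite{Diestel_2017}, so there is no in-paper argument to compare against. Your proof is correct and complete, and it is essentially the standard textbook route: Tychonoff compactness (with the finite intersection property discharged by the finite marriage theorem) produces a matching $M_A$ saturating $A$ from condition (1) and a matching $M_B$ saturating $B$ from condition (2); the spanning subgraph with edge set $M_A \cup M_B$ has all degrees $1$ or $2$ and no isolated vertices, so its components are alternating paths and cycles; the observation that a degree-one endpoint in $A$ (resp.\ $B$) must carry its $M_A$-edge (resp.\ $M_B$-edge) kills odd finite paths by the parity clash you describe; and each surviving component is perfectly matched by one of its two alternating edge classes. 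All of these steps check out, including the edge case of a component consisting of a single edge of $M_A \cap M_B$, which is just a two-vertex even path. Two cosmetic remarks: for a doubly infinite path there is no ``appropriate edge to start at,'' but either alternating class (say, the $M_A$-edges of that component) is a perfect matching of it, so nothing is lost; and your phrase ``any element is the desired $M_A$'' should read that any element of the nonempty intersection \emph{induces} the matching $\set{\simpleset{a,\phi(a)}}{a \in A}$. Your closing comment correctly identifies where both Hall conditions are used --- each one alone only yields a one-sided saturation, and it is their conjunction that forces every component to be perfectly matchable.
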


	A graph $G$ is called $k$-regular iff every vertex has degree $k$.
	It is easy to see that for $k \in \omega \setminus 2$ every bipartite $k$-regular graph satisfies the assumptions of Halls's marriage theorem.
	Thus:

	\begin{corollary*}
		Let $k \in \omega \setminus 2$ and $G$ be $k$-regular.
		Then $G$ has a perfect matching.
	\end{corollary*}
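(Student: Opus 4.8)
The plan is to check that every bipartite $k$-regular graph $G$ with bipartition $(A,B)$ satisfies hypotheses (1) and (2) of the locally finite marriage theorem above, after which the conclusion is immediate. I would first note that $G$ is locally finite, since every vertex has degree exactly $k < \omega$; consequently, for any finite $S \subseteq A$ the neighbourhood $\text{N}(S)$ is itself finite, being contained in the union of the $k$-element neighbourhoods of the finitely many vertices of $S$. This finiteness is precisely what lets the counting below take place among natural numbers.

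The core of the argument is a double count of the edges joining $S$ to $\text{N}(S)$, for $S \subseteq A$ finite. Since $G$ is bipartite, every edge incident to a vertex of $S$ has its other endpoint in $B$, hence in $\text{N}(S)$; as each vertex of $S$ has degree $k$, there are exactly $k\left|S\right|$ such edges. On the other hand each of these edges is incident to a vertex of $\text{N}(S)$, and each such vertex has degree $k$, so their number is at most $k\left|\text{N}(S)\right|$. Hence $k\left|S\right| \leq k\left|\text{N}(S)\right|$, and dividing by $k \geq 2$ gives $\left|S\right| \leq \left|\text{N}(S)\right|$, which is (1). Exchanging the roles of $A$ and $B$, the identical computation yields (2). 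Applying the locally finite marriage theorem then produces a perfect matching of $G$.

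I expect no real obstacle: the argument is an elementary double count. The only point deserving attention is the role of local finiteness, which is needed both to invoke the cited theorem and to ensure that $\text{N}(S)$ is finite, so that the inequality $k\left|S\right| \leq k\left|\text{N}(S)\right|$ is one between finite numbers and may legitimately be divided by $k$.
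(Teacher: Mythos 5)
Your proof is correct and is precisely the standard double-counting verification that the paper leaves implicit, as it only remarks that it is ``easy to see'' that a bipartite $k$-regular graph satisfies the hypotheses of the locally finite marriage theorem. Your attention to local finiteness (so that $\left|\text{N}(S)\right|$ is a natural number and the inequality $k\left|S\right| \leq k\left|\text{N}(S)\right|$ can be divided by $k$) is exactly the right point to make explicit, and the argument goes through unchanged for the multigraphs arising in the paper's subsequent lemma.
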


	We use this corollary to prove the following crucial lemma:

	\begin{lemma}
		Let $g:\omega \times k \to \omega$ be $k$-to-$1$, i.e.\ every $m$ has exactly $k$ preimages under $g$.
		Then there is a function $i:\omega \to k$ such that $h(n) := g(n, i(n))$ is a permutation of $\omega$.
	\end{lemma}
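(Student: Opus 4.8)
The plan is to reformulate the problem as finding a perfect matching in a suitable bipartite graph and then to invoke the $k$-regular matching corollary proved above. First I would set up a bipartite (multi)graph $\Gamma$ whose two vertex classes $A$ and $B$ are each a copy of $\omega$: think of $A$ as indexing the desired domain values $n$ and $B$ as indexing the range. For every pair $(n,j) \in \omega \times k$ I put an edge $e_{n,j}$ joining $n \in A$ to $g(n,j) \in B$. A choice of exactly one incident edge at each vertex of $A$ is then precisely a choice of a function $i \colon \omega \to k$, and the map $h(n) := g(n, i(n))$ simply records the $B$-endpoints of the selected edges.

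The point of this encoding is that the hypothesis on $g$ translates directly into $k$-regularity. Each vertex $n \in A$ is incident to exactly the $k$ edges $e_{n,0}, \dots, e_{n,k-1}$, so $\deg(n) = k$. Each vertex $m \in B$ is incident to one edge $e_{n,j}$ for each pair $(n,j)$ with $g(n,j) = m$, and by assumption there are exactly $k$ such pairs, so $\deg(m) = k$ as well. Hence $\Gamma$ is $k$-regular and, in particular, locally finite, and the corollary (for $k \geq 2$; the cases $k \leq 1$ are immediate since then $g$ already induces a bijection) produces a perfect matching $P$. If one prefers to avoid multigraphs, one passes to the underlying simple graph and checks Hall's two conditions directly by a double count: for finite $S \subseteq A$ the $k|S|$ pairs in $S \times k$ map under $(n,j) \mapsto g(n,j)$ into $N(S)$, and each fiber of this map has size at most $k$, giving $|N(S)| \geq |S|$; symmetrically, for finite $S \subseteq B$ one counts the pairs sent into $S$, of which there are exactly $k|S|$, against the vertices of $N(S)$ that carry them.

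Finally I would read off the conclusion from $P$. For each $n \in A$ there is a unique edge of $P$ at $n$, say $e_{n, i(n)}$; this defines $i \colon \omega \to k$, and I set $h(n) := g(n, i(n))$, the $B$-endpoint of that edge. Injectivity of $h$ follows because two vertices $n_1 \neq n_2$ with $h(n_1) = h(n_2) = m$ would yield two distinct edges of $P$ meeting $m \in B$, contradicting that $P$ is a matching; surjectivity follows because each $m \in B$ is met by a unique edge $e_{n,j} \in P$, which must be the matching edge $e_{n,i(n)}$ at $n$, forcing $h(n) = m$. Thus $h$ is a permutation of $\omega$, as required.

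I expect the main obstacle to be the bookkeeping around possible non-injectivity of $g(n, \cdot)$: when $g(n, j_1) = g(n, j_2)$ the edges $e_{n,j_1}$ and $e_{n,j_2}$ are parallel, so $\Gamma$ is genuinely a multigraph and one must make sure the stated corollary still applies. The double-counting verification of Hall's condition is exactly what sidesteps this issue, since the neighbour set $N(S)$ is insensitive to parallel edges; this is the only step that requires real care, the remainder being a direct translation between perfect matchings of $\Gamma$ and the pair $(i,h)$.
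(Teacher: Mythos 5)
Your proposal is correct and follows essentially the same route as the paper: the paper also builds the bipartite multigraph with edges $e_{n,i}$ from $L_n$ to $R_{g(n,i)}$, notes $k$-regularity, extracts a perfect matching via the corollary, and reads off $i$ and $h$ exactly as you do. Your extra remarks on handling parallel edges via the double-counting form of Hall's condition are a reasonable precaution but not a divergence in method.
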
	

	\begin{proof}
		We define a graph with bipartition $(\set{L_n}{n \in \omega}, \set{R_n}{n \in \omega})$ with possible multi-edges.
		For $n \in \omega$ and $i \in k$ we have an edge $e_{n,i}$ from $L_n$ to $R_{g(n, i)}$.
		Then, every $L_n$ has degree $k$ and since $g$ is $k$-to-$1$, also every $R_n$ has degree $k$, so the graph is $k$-regular.
		Hence, we may pick a perfect matching $P$.
		Define a function $i:\omega \to k$ by
		\[
			i(n) = l \quad \iff \quad e_{n,l} \in P.
		\]
		This function is well-defined since $P$ is a matching and every $e_{n,l}$ is incident to $L_n$.
		Similarly, $i$ is defined everywhere, as $P$ is perfect.
		It remains to show that $h(n) := g(n, i(n))$ is a permutation.
		So, let $n,m \in \omega$ and assume $g(n, i(n)) = h(n) = h(m) = g(m, i(m))$.
		By definition, both $e_{n, i(n)}$ and $e_{m, i(m)}$ are in $P$, but they are both incident to $R_{g(n, i(n))} = R_{g(m,i(m))}$.
		As $P$ is a matching, they have to be the same edge, so $n = m$.
		Thus, $h$ is injective.
		
		Now, let $m \in \omega$.
		Since $P$ is perfect, there is an edge $e_{n,l}$ incident to $R_m$.
		By definition, this implies that $h(n) = g(n,i(n)) = m$.
		Thus, $h$ is surjective.
	\end{proof}

	Finally, we can put everything together to obtain our desired result:

	\begin{theorem}
		Let $G \actson \omega$ be cofinitary and maximal as an eventually different family of permutations and $F$ be a finite group.
		Then $G \times F \actson \omega \times F$ is cofinitary and maximal as an eventually different family of permutations.
	\end{theorem}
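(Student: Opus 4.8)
The plan is to treat the two assertions of the conclusion separately. Cofinitariness is immediate from the preceding Lemma: I would apply it with $X = \omega$, with the free action being the left regular action $F \actson F$ (free by Cayley's theorem) and $Y = F$ finite. Since the induced product action of that Lemma is exactly $G \times F \actson \omega \times F$, it is cofinitary. All the real work is in proving maximality as an eventually different family of permutations.

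For maximality I would argue by contradiction. Suppose $G \times F \actson \omega \times F$ is not maximal as an eventually different family, witnessed by a permutation $\sigma$ of $\omega \times F$ that is eventually different from every element of $G \times F$ (so in particular $\sigma \notin G \times F$). The idea is to manufacture from $\sigma$ a single permutation $\tau$ of $\omega$ to which the maximality of $G$ can be applied, and then to transport the resulting infinite agreement back up to $\omega \times F$ to contradict the choice of $\sigma$. Write $n := \left|F\right|$, fix an enumeration identifying $F$ with $n$, and consider $\Phi := \pi_1 \circ \sigma : \omega \times F \to \omega$, the first coordinate of $\sigma$. Since $\sigma$ is a bijection and each fiber $\{m\} \times F$ has exactly $n$ elements, $\Phi^{-1}(m) = \sigma^{-1}(\{m\} \times F)$ has size $n$, so $\Phi$ is $n$-to-$1$.

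The key step is to promote the many-to-one $\Phi$ to an honest permutation, which is exactly what the crucial Lemma (via the matching corollary to Hall's theorem) provides: there is a selector $i : \omega \to F$ such that $\tau(k) := \Phi(k, i(k)) = \pi_1(\sigma(k, i(k)))$ is a permutation of $\omega$. Now I apply the maximality of $G$ to $\tau$: either $\tau \in G$, or $\tau$ agrees infinitely often with some $g \in G$, so in either case there is $g \in G$ with $\{k : \tau(k) = g.k\}$ infinite (taking $g = \tau$ in the first case). For these infinitely many $k$ we have $\pi_1(\sigma(k, i(k))) = g.k$. Since $F$ is finite, a pigeonhole on the pair $(i(k), \pi_2(\sigma(k, i(k)))) \in F \times F$ yields fixed $a, b \in F$ with $i(k) = a$ and $\sigma(k, a) = (g.k, b)$ for infinitely many $k$. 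Then $(g, ba^{-1}) \in G \times F$ satisfies $(g, ba^{-1}).(k, a) = (g.k, b) = \sigma(k, a)$ for all these $k$, so $\sigma$ agrees with $(g, ba^{-1})$ on infinitely many points, contradicting that $\sigma$ is eventually different from every element of $G \times F$.

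The main obstacle I expect is the passage from $\Phi$ to the genuine permutation $\tau$: the first-coordinate projection of $\sigma$ is only $n$-to-$1$ and carries no permutation structure by itself, so a combinatorial selection is unavoidable, and this is precisely where the matching Lemma is indispensable. The only other delicate point is the concluding pigeonhole, where finiteness of $F$ is used both to extract constant indices $a, b$ and to pin down the correct group element $(g, ba^{-1})$. I would also emphasize that this is exactly where the stronger hypothesis (maximal as an eventually different family rather than merely maximal cofinitary) is genuinely needed, since the construction controls only the single permutation $\tau$ against the individual elements of $G$, not arbitrary words in a would-be extension.
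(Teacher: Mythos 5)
Your proposal is correct and follows essentially the same route as the paper: cofinitariness via the product lemma, the $\left|F\right|$-to-$1$ projection promoted to a permutation by the matching lemma, maximality of $G$ applied to that permutation, and a pigeonhole over the finite group to pin down a single element of $G \times F$ agreeing with $\sigma$ infinitely often. The only cosmetic differences are that you argue by contradiction rather than directly and pigeonhole on the pair $(i(k), \pi_2(\sigma(k,i(k)))) \in F \times F$ where the paper pigeonholes on the single value $p_1(h(n,f_n)) \circ f_n^{-1} \in F$; both yield the same witness.
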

	
	\begin{proof}
		By the previous discussion it suffices to check that the induced action is maximal as an eventually different family of permutations, so let $h \in S_{\omega \times F}$.
		For $i \in 2$ let $p_i$ be the projection to the $i$-th component.
		Then, $p_0 \circ h$ is $\left|F\right|$-to-$1$, so by the previous Lemma we may choose $f:\omega \to F$ such that $\tilde{h}(n) := p_0(h(n, f_n))$ is a permutation.
		By maximality of $G \actson \omega$ as an eventually different family of permutations, there is a $g \in G$ and $A \in \infsubset{\omega}$ with $g(n) = \tilde{h}(n)$ for all $n \in A$.
		Now, we define a function $\Psi:A \to F$ by
		\[
			\Psi(n) := p_1(h(n,f_n)) \circ f_n^{-1}.
		\]
		Since $A$ is infinite and $F$ is finite, by the pigeonhole principle choose $B \in \infsubset{A}$ and $\tilde{f} \in F$, such that $\Psi(n) = p_1(h(n,f_n)) \circ f_n^{-1} = \tilde{f}$ for all $n \in B$.
		Then, for every $n \in B$ we have
		\begin{align*}
			h(n,f_n) &= (p_0(h(n,f_n)), p_1(h(n,f_n))) = (\tilde{h}(n), p_1(h(n,f_n)) \circ f_n^{-1} \circ f_n)\\
				&= (g(n), \tilde{f} \circ f_n) = (g \times \tilde{f})(n, f_n).
		\end{align*}
		But this shows that $h =^\infty g \times \tilde{f}$, i.e.\ $G \times F \actson \omega \times F$ is maximal as an eventually different family of permutations.
	\end{proof}
	
	By the discussion in the introduction we may combine this result with the results by Mejak and in Schrittesser in \cite{MejakSchrittesser_2022} to obtain maximal cofinitary groups of the following complexity:
	
	\begin{corollary}
		Let $F$ be a finite group.
		Then, there is $\Pi^0_1$-generating set for a maximal cofinitary group isomorphic to $(\bigast_\c \ZZ) \times F$.
		Thus, the group itself has complexity $\Sigma^0_2$.
	\end{corollary}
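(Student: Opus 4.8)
The plan is to instantiate the previous theorem with the free maximal cofinitary group produced by Schrittesser and Mejak and then transport the resulting product action back to $\omega$. Concretely, let $G \actson \omega$ be the action generated by their $\Pi^0_1$ generating set. By their theorem $G$ is free and maximal cofinitary; being a maximal cofinitary group it has cardinality $\c$, so as a free group it is isomorphic to $\bigast_\c \ZZ$, and crucially it is maximal as an eventually different family of permutations. Applying the previous theorem with this $G$ and the given finite group $F$ immediately yields that $G \times F \actson \omega \times F$ is cofinitary and maximal as an eventually different family of permutations; in particular it is a maximal cofinitary group, and its isomorphism type is $(\bigast_\c \ZZ) \times F$.

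It then remains to realize this action on $\omega$ itself and to control the complexity. Identify $F$ with $\set{m}{m < |F|}$ and fix the bijection $c \colon \omega \times F \to \omega$ given by $c(n, m) = |F| \cdot n + m$. Transporting along $c$ turns $G \times F \actson \omega \times F$ into an isomorphic action on $\omega$, which is therefore again maximal cofinitary of the same isomorphism type. As a generating set I would take the images under $c$ of the permutations $g \times e_F$ for $g$ ranging over the original generators of $G$, together with the finitely many permutations $e_\omega \times f$ for $f$ in a fixed finite generating set of $F$; these clearly generate $G \times F$.

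The only thing left to verify is that this generating set is again $\Pi^0_1$ and that the group it generates is $\Sigma^0_2$. Writing $\pi_g$ for the permutation of $\omega$ induced by $g \times e_F$, one computes $\pi_g(|F| n + m) = |F| g(n) + m$, so $\pi_g$ sends each block $\set{|F| n + m}{m < |F|}$ onto a block by the identity on the offset $m$. The assignment $g \mapsto \pi_g$ is thus a homeomorphism from $S_\omega$ onto the set $\set{\pi \in S_\omega}{\forall n \, \exists k \, \forall m < |F| \colon \pi(|F| n + m) = |F| k + m}$, which is closed since it is cut out by conditions on finite pieces of $\pi$. Hence the image of the closed original generating set stays closed, and adjoining the finitely many generators coming from $F$ preserves closedness; this gives the $\Pi^0_1$ generating set. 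The $\Sigma^0_2$ bound for the group then follows exactly as in the complexity computation for the original group, since membership is witnessed by a word in the generators whose relevant behaviour, after coding, is still read off finite pieces of the input.

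The genuinely new mathematical content lies entirely in the previous theorem; the main (and only mild) obstacle in the corollary is the bookkeeping of the last paragraph, namely confirming that the coding map $c$ pushes the closed generating set forward to a genuinely closed set rather than merely an analytic one, which is what could in principle fail when transporting a closed set along a continuous map. This is resolved by the observation that $g \mapsto \pi_g$ is not merely continuous but a topological embedding with closed image, so that closedness is preserved.
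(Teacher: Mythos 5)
Your proposal is correct and follows exactly the route the paper intends: instantiate the product theorem with the Schrittesser--Mejak group and transport along the coding $(n,m)\mapsto |F|n+m$, checking that the block structure keeps the generating set closed (the paper only sketches this in the introduction, so your bookkeeping is a welcome elaboration rather than a deviation). One small caution: maximality alone does not force cardinality $\c$ (maximal cofinitary groups can consistently be smaller than $\c$); here the correct justification is that the $\Pi^0_1$ generating set is uncountable Borel, hence contains a perfect set, so the free group has $\c$ generators.
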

	
	\section{Questions}

	By the theorem from the first section there are always groups of size $\c$ which have a cofinitary representation, but no maximal cofinitary representation.
	However, it is not known, whether possibly every subgroup of $S_\omega$ (consistently) has a cofinitary representation:

	\begin{question}
		Does $S_\omega$ (consistently) have a (maximal) cofinitary representation?
	\end{question}

	Remember that every countable group has a cofinitary representation, so under {\sf CH} every group of size $<\!\c$ has a cofinitary representation.
	However, we cannot have this situation with large continuum by the subsequent theorem.
	Hence, the above question would be the maximal amount of cofinitary representations one could hope for.

	\begin{theorem*}[De Bruijn, \cite{DeBruijn_1957}]
		There is a group of size $\aleph_1$ which cannot be embedded into $S_\omega$.
	\end{theorem*}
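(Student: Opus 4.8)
The plan is to reduce embeddability into $S_\omega = \Sym(\omega)$ to a purely group-theoretic separation property, and then to build a group of size $\aleph_1$ for which that property fails. First I would record the standard reformulation: a group $G$ embeds into $\Sym(\omega)$ if and only if it admits a faithful action on a countable set. Decomposing such an action into its (at most countably many) orbits, each orbit is a transitive $G$-set $G/H_i$ with $[G:H_i] \leq \aleph_0$, and faithfulness says precisely that the kernel $\bigcap_i \mathrm{Core}_G(H_i) = \simpleset{e}$, where $\mathrm{Core}_G(H) = \bigcap_{g \in G} gHg^{-1}$. Hence it suffices to construct $G$ with $\sizeof{G} = \aleph_1$ together with a nontrivial \emph{central} element $z$ lying in every subgroup of countable index: then $z \in \mathrm{Core}_G(H)$ for each such $H$, so $z \in \bigcap_i \mathrm{Core}_G(H_i) \neq \simpleset{e}$ for every countable family, and no faithful action on a countable set can exist.

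The construction I would use is a Heisenberg-type central extension. Let $V := \bigoplus_{\omega_1} \FF_2$, an $\FF_2$-vector space with $\dim_{\FF_2} V = \aleph_1$ and $\sizeof{V} = \aleph_1$, and fix a \emph{nondegenerate} alternating form $B : V \times V \to \FF_2$ (pair the basis into hyperbolic planes). Choosing a bilinear $\beta$ with $\beta(u,v) + \beta(v,u) = B(u,v)$, I set $G := V \times \FF_2$ with product $(u,s)(v,t) := (u+v,\, s+t+\beta(u,v))$. Then $G$ has size $\aleph_1$, the element $z := (0,1)$ is a central involution, and the commutator of lifts of $u,v \in V$ equals $(0, B(u,v))$. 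Note that the abelianization is $V$, so that $\bigoplus_{\aleph_1} \ZZ_2$ --- which \emph{does} embed into $S_\omega$ by the theorem of the second section --- only becomes non-embeddable after twisting by $B$; this is exactly why a noncommutative extension is essential.

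It then remains to verify that every subgroup $H \leq G$ of countable index contains $z$. Let $\pi : G \to V$ be the quotient map, and suppose toward a contradiction that $[G:H] \leq \aleph_0$ but $z \notin H$. Then $\bar H := \pi(H)$ has countable index in $V$; since $[V:\bar H] = 2^{\mathrm{codim}\, \bar H}$ and $2^{\aleph_0} > \aleph_0$, the index can be countable only when $\mathrm{codim}_{\FF_2} \bar H$ is \emph{finite}. On the other hand $z \notin H$ forces $H \cap \langle z \rangle = \simpleset{e}$, so $\pi$ restricts to an isomorphism $H \to \bar H$; as $\bar H$ is abelian, the lifts $s(u), s(v) \in H$ of $u, v \in \bar H$ commute, whence $(0, B(u,v)) = [s(u), s(v)] = e$ and $B$ vanishes on $\bar H$. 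Thus $\bar H$ is totally isotropic of finite codimension. But nondegeneracy makes the map $\bar H \to (V/\bar H)^{*}$, $u \mapsto B(u, \cdot)$, well-defined (by isotropy) and injective (by nondegeneracy), forcing $\dim \bar H \leq \dim (V/\bar H)^{*} = \mathrm{codim}\, \bar H < \aleph_0$, which contradicts $\dim \bar H = \aleph_1$. Hence $z \in H$, and the reduction above finishes the argument.

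I expect the main obstacle to be precisely this last dimension count: arranging, on an uncountable-dimensional space, a form whose every countable-index subspace fails to be totally isotropic. The symplectic choice of $B$ makes this transparent, but two points require care. One must use $2^{\aleph_0} > \aleph_0$ to pass from \emph{countable index} to \emph{finite codimension}, and one must invoke nondegeneracy in its correct infinite-dimensional form, where $V \to V^{*}$ is injective but far from surjective, so that the estimate is applied only against the finite-dimensional quotient $V/\bar H$ rather than against $V^{*}$ itself.
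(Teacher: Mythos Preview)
The paper does not prove De~Bruijn's theorem; it is merely cited as background in the final section, so there is no ``paper's proof'' to compare against. I therefore comment on the correctness of your argument itself.

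Your overall strategy is sound and classical: reduce to finding a central element $z$ lying in every subgroup of countable index, and realise this via a Heisenberg-type extension of $V=\bigoplus_{\omega_1}\FF_2$ by a nondegenerate alternating form. The reduction, the construction of $G$, the identification of commutators with $B$, and the implication ``$z\notin H\Rightarrow \bar H:=\pi(H)$ is totally isotropic'' are all correct.

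There is, however, a genuine gap at the step where you pass from ``$\bar H$ has countable index in $V$'' to ``$\bar H$ has \emph{finite} codimension''. Your justification, $[V:\bar H]=2^{\mathrm{codim}\,\bar H}$, holds only for finite codimension: an $\FF_2$-space of infinite dimension $\kappa$ has cardinality $\kappa$, not $2^{\kappa}$, so a subspace of codimension $\aleph_0$ has index exactly $\aleph_0$, which is countable. Your subsequent dual-space estimate $\dim\bar H\le\dim(V/\bar H)^{*}$ then yields only $\aleph_1\le 2^{\aleph_0}$ in that case, which is no contradiction. Thus the case $\mathrm{codim}\,\bar H=\aleph_0$ is simply not covered, and the point you flag as ``requiring care'' is in fact the point where the argument breaks.

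The construction can be rescued by exploiting the explicit symplectic basis rather than abstract duality. If $\bar H$ had countable codimension, then $V/\bar H$ is countable, so by pigeonhole there are three distinct indices $\alpha,\beta,\gamma<\omega_1$ with $e_\alpha\equiv e_\beta$ and $f_\alpha\equiv f_\gamma$ modulo $\bar H$; but then $e_\alpha-e_\beta,\,f_\alpha-f_\gamma\in\bar H$ while $B(e_\alpha-e_\beta,\,f_\alpha-f_\gamma)=B(e_\alpha,f_\alpha)=1$, contradicting total isotropy. With this replacement your proof goes through.
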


	Finally, the author would like to know if the second theorem really needed to assume that $G$ is maximal as an eventually different family of permutations:
	
	\begin{question}
		Let $G \actson \omega$ be maximal cofinitary and $F$ be a finite group.
		Is then the induced product action $G \times F \actson \omega \times F$ maximal cofinitary?
	\end{question}

	We end with a short discussion of what prohibits the canonical proof using the strategy presented above, which may be helpful for anyone trying to answer this question.
	Again, given $h \in S_{\omega \times F}$, we may find $f: \omega \to F$ such that $\tilde{h}(n) := p_0(h(n,f_n))$ is a permutation.
	Now, by maximality of $G$ we only obtain a word $\tilde{w}$ in the alphabet $G \cup \simpleset{x, x^{-1}}$, so that $\tilde{w}[\tilde{h}] =^\infty e$, where $\tilde{w}[\tilde{h}]$ means substituting $x$ for $\tilde{h}$.
	Now, it is again possible (but more work) to replace each letter in $g$ in $\tilde{w}$ from $G$ by some $(g, f_g)$, to obtain a word $w$ satisfying $w[h] =^\infty e$.
	The problem is that this only shows that $h$ cannot be added to $G \times F$ if $w[h] \neq e$.
	Otherwise, this just shows that $h$ has some non-trivial relation.
	But even if $\tilde{w}[\tilde{h}] \neq e$, this does not imply that $w[h] \neq e$ as the following example shows.
	
	Let $F = \ZZ_2$ and consider $h \in S_{\omega \times \ZZ_2}$ given by
	
	\adjustbox{scale=1,center}{
	\begin{tikzcd}
		0 \arrow[r, leftrightarrow] & 1 & 2 & 3 \arrow[r, leftrightarrow] & 4 & 5 \arrow[r, leftrightarrow] & 6 & \dots\\
		0 \arrow[rru, leftrightarrow] & 1 \arrow[r, leftrightarrow] & 2 & 3 \arrow[r, leftrightarrow] & 4 & 5 \arrow[r, leftrightarrow] & 6 & \dots
	\end{tikzcd}}
	Clearly, $h$ is cofinitary and satisfies $h^2 = e$.
	Now, a possible $\tilde{h} \in S_\omega$ is given by
	
	\adjustbox{scale=1,center}{
	\begin{tikzcd}
			0 \arrow[r] & 1 \arrow[r] & 2 \arrow[ll, bend left] & 3 \arrow[r, leftrightarrow] & 4 & 5 \arrow[r, leftrightarrow] & 6 & \dots
	\end{tikzcd}}
	(One may check that the only other possible choice for $\tilde{h}$ runs into the same issue).
	Now, the word $\tilde{w}$ given to us showing that $\tilde{h}$ cannot be added to $G$ may be $\tilde{w} = x^2$, because $\tilde{w}[\tilde{h}] = \tilde{h}^2$
	
	\adjustbox{scale=1,center}{
	\begin{tikzcd}
			0 \arrow[rr, bend right] & 1 \arrow[l] & 2 \arrow[l] & 3  & 4 & 5 & 6 & \dots
	\end{tikzcd}}
	is not cofinitary.
	But $w[h] = h^2 = e$, so this $w$ does not show that $h$ cannot be added to $G \times \ZZ_2$.

	\bibliographystyle{plain}
	\bibliography{refs}
	
\end{document}